\newtheorem{thm}{Theorem}[section]
\newtheorem{theorem}[thm]{Theorem}
\newtheorem{proposition}[thm]{Proposition}
\newtheorem{corollary}[thm]{Corollary}
\newtheorem{lemma}[thm]{Lemma}
\newtheorem{prop}[thm]{Proposition}
\theoremstyle{definition}
\newtheorem{definition}{Definition}[section]
\theoremstyle{observation}
\theoremstyle{definition}
\newtheorem{example}{Example}[section]
\newcommand{\A}{\mathcal{A}}
\newcommand{\B}{\mathcal{B}}
\newcommand{\M}{\mathcal{M}}
\newcommand{\defin}[1]{{\it #1}}
\newcommand{\R}{\mathbb{R}}
\newcommand{\N}{\mathbb{N}}
\newcommand{\Q}{\mathbb{Q}}
\newcommand{\C}{\mathbb{C}}
\newcommand{\Z}{\mathbb{Z}}
\newcommand{\D}{\mathbb{D}}
\newtheorem*{riemann-mapping-theorem}{Riemann Mapping  Theorem}{\bf}{\it}
{\bf}{\it}
{\bf}{\it}
{\bf}{\it}
{\bf}{\it}
{\bf}{\it}
{\bf}{\it}
{\bf}{\it}
{\bf}{\it}
{\bf}{\it}
\newenvironment{pf*}[1]{\proof[#1]}{\endproof}
\newcommand{\cal}[1]{{\mathcal #1}}
\newcommand{\beq}{\begin{equation}}
\newcommand{\eeq}{\end{equation}}
\newcommand{\riem}{{\widehat{\CC}}}
\newcommand{\dist}{\operatorname{dist}}
\newcommand{\cl}{\operatorname{cl}}
\newcommand{\eps}{\epsilon}
\numberwithin{equation}{section}
\newcommand{\cA}{{\cal A}}
\newcommand{\cW}{{\cal W}}
\newcommand{\cM}{{\cal M}}
\renewcommand{\cR}{{\cal R}}
\newcommand{\cE}{{\cal E}}
\newcommand{\CC}{{\mathbb C}}
\newcommand{\RR}{{\mathbb R}}
\newcommand{\ZZ}{{\mathbb Z}}
\newcommand{\NN}{{\mathbb N}}
\newcommand{\DD}{{\mathbb D}}
\newcommand{\QQ}{{\mathbb Q}}
\newcommand{\ignore}[1]{{}}
\title[Computability of Mandelbrot-like set]{Non computable Mandelbrot-like set for a one-parameter complex family}
\author{Daniel Coronel, Cristobal Rojas, and Michael Yampolsky}
 \thanks{D.C. and C.R were partially supported by project DI-782-15/R Universidad Andr\'es Bello and Basal PFB-03 CMM-Universidad de Chile. M.Y. was partially supported by NSERC Discovery grant.}
\begin{document}
\maketitle

\begin{abstract} We show the existence of  computable complex numbers $\lambda$ for which the bifurcation locus of the  one parameter complex family $f_{b}(z) = \lambda z + b z^{2} + z^{3}$ is not Turing computable. 
\end{abstract}


\section{Introduction}
For a complex quadratic map $P_{c}(z)= z^{2} + c$,  recall that the filled Julia set $K(P_{c})$ corresponds to the set of points $z\in \CC$ whose orbit under iterations by $P_{c}$ remains bounded, and that the Julia set $J(P_{c})$ is defined as the boundary of $K(P_{c})$.  Let us also recall that the Mandelbrot set $\M$ is defined to be the \emph{connectedness locus} of the family $P_{c}(z)=z^{2}+c, \,\, c\in \CC$:
the set of complex parameters $c$ for which the Julia set $J(P_{c})$ is connected. The boundary of $\M$ corresponds to the parameters near which the geometry of the Julia set undergoes a dramatic change. For this reason, its boundary $\partial \M$ is referred to as the \emph{bifurcation locus}. 
The Mandelbrot set  is widely known for the spectacular beauty of its fractal structure, and an enormous amount of effort has been made in order to understand its topological and geometrical properties. This effort has greatly relied on computer simulations, and it is most natural to ask {\it whether these simulations can be trusted}. A form of this question was first asked by Penrose in \cite{penrose1991emperor} and has been a subject of much interest.

The central open conjecture in complex dynamics is known as Density of Hyperbolicity Conjecture. This conjecture is widely expected to be true, and postulates that $\cM$ is the closure of the open set of parameter values $c$ for which $P_c$ exhibits {\it hyperbolic dynamics}. The latter simply means that
$|DP^n_c|>1$ on a neighborhood of $J(P_c)$ for some $n\in\NN$.
In \cite{Hert}, Hertling demonstrated that Density of Hyperbolicity Conjecture
implies that Mandelbrot set $\cM$ as well as its boundary, the bifurcation locus $\partial\M$, are rigorously computable.

In this paper we show that such a computability property cannot be taken for granted. We consider a different one-parameter family of complex dynamical systems, studied by X.~Buff and C.~Henriksen in \cite{BH}:
$$
f_b = \lambda z + bz^2 + z^3, \qquad b\in \mathbb{C},
$$ 
where $\lambda = e^{2i\pi \theta} \in \mathbb{C}$ satisfies $|\lambda|=1$.  We denote by 
$\M_\lambda$ the connectedness locus of the family, that is, the set of complex parameters $b$ for which the Julia set $J(f_{b})$ is connected. 

Our main result is the following. 

\medskip
\noindent\textbf{Main Theorem.} \emph{There exists a computable (by an explicit algorithm) value of $\lambda$ such that the bifurcation locus $\partial \M_{\lambda}$ is not computable}. 
\bigskip

\noindent
A principal result of  \cite{BH} is that for each $\lambda$ of modulus 1, the bifurcation locus $\partial \M_\lambda$ contains quasi-conformal copies of the quadratic Julia set $J(\lambda z + z^2)$ (see Figure~1 for an illustration). The proof of the Main Theorem relies on a  computable version of this statement, which is our principal technical result.

\begin{figure}[ht]
\centerline{\includegraphics[width=10cm]{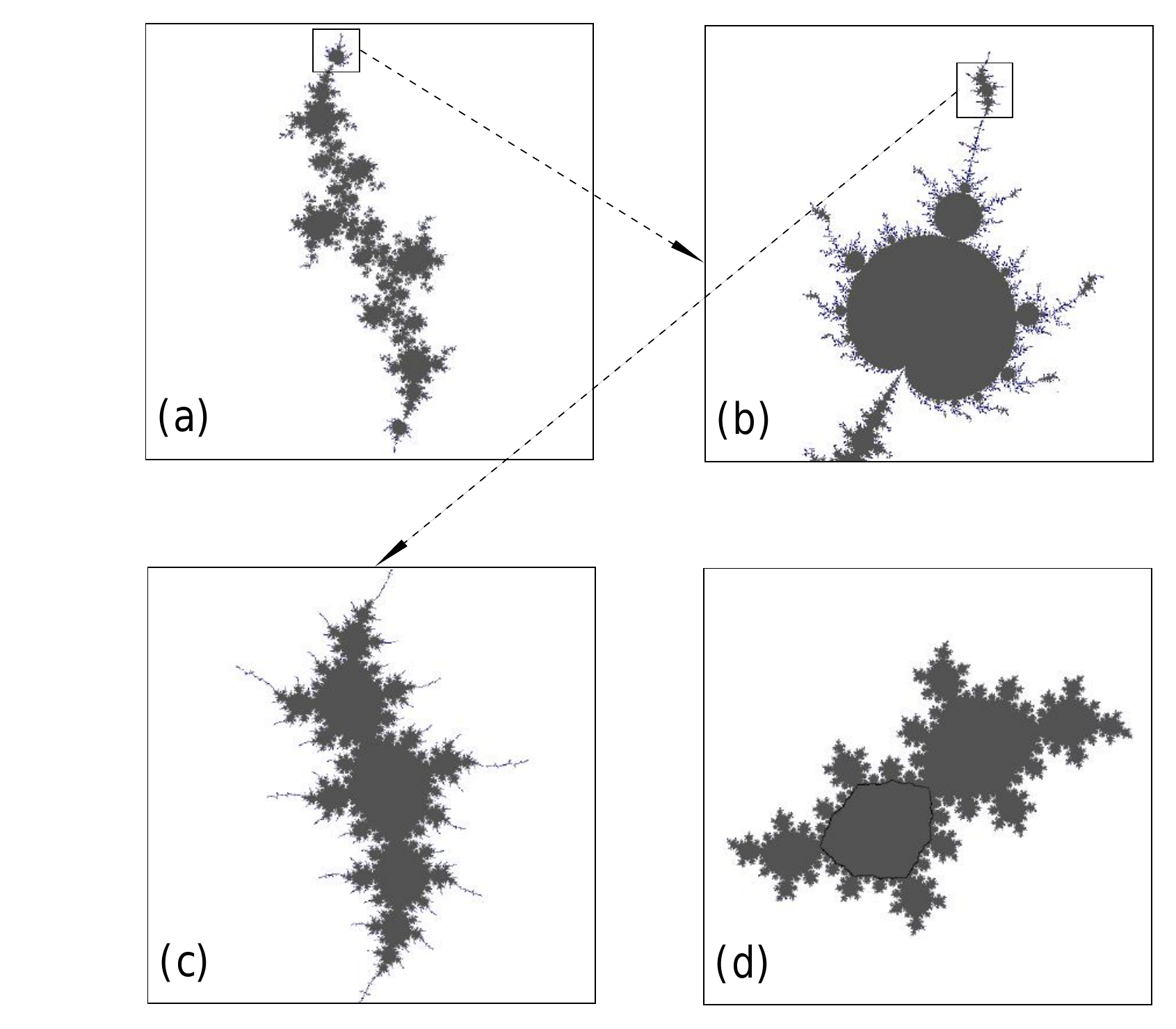}}
\caption{A reproduction of Figure~1 from \cite{BH} (computed using {\it Mandel} by W.~Jung). $\lambda=e^{2\pi i\theta}$, where $\theta=(\sqrt{1}+1)/2$ is the golden mean. (a) is the set $\M_\lambda$; (b) and (c) are its successive blow-ups, showing a copy of the Julia set $J(\lambda z+z^2)$ hidden inside. (d) is the Julia set $J(\lambda z+z^2)$; the boundary of the Siegel disk is highlighted. Note that this Julia set is actually computable  (see \cite{BY-MMJ}).
\label{figure1}}
\end{figure}


 \section{Preliminaries on Computability}
 
 \subsection{Rudiments of Computable Analysis and applications to Julia sets}
We give a very brief summary of relevant notions of Computability Theory and Computable Analysis. For a more in-depth
introduction, the reader is referred to \cite{Wei, BY-book}.
As is standard in Computer Science, we formalize the notion of
an algorithm as a {\it Turing Machine} \cite{Tur}.
%

We will call a  function $f:\NN\to\NN$  \emph{computable} (or {\it recursive}), if there exists an algorithm $\cA$ which, upon input $n$, outputs $f(n)$.

Extending algorithmic notions to functions of real numbers was pioneered by Banach and Mazur \cite{BM,Maz}, and
is now known under the name of {\it Computable Analysis}. Let us begin by giving the modern definition of the notion of computable real
number,  which goes back to the seminal paper of Turing \cite{Tur}. By identifying $\Q$ with $\N$ through some effective enumeration, we can assume algorithms can operate on $\Q$.

\begin{definition}A real number $x$ is called \defin{computable} if there is a computable function $f:\NN \to \QQ$ such that \newline $|f(n)-x|<2^{-n};$
\end{definition}

	Algebraic numbers or  the familiar constants such as $\pi$, $e$, or the Feigembaum constnt  \cite{Hoy09}  are all computable. However, the set of all computable numbers $\RR_C$ is necessarily
countable, as there are only countably many computable functions. 


For more general objects, computability is typically defined according to the following principle: object $x$ is computable if there exists an algorithm $\A$ which, upon input $n$, outputs a finite suitable description of $x$  at precision  $2^{-n}$. In this case we say that algorithm $\A$ \emph{computes} object $x$. 

For instance, computability of compact subsets of $\RR^\ell$ is defined as follows. Recall that {\it Hausdorff distance} between two
compact sets $K_1$, $K_2$ is
$$\dist_H(K_1,K_2)=\inf_\eps\{K_1\subset U_\eps(K_2)\text{ and }K_2\subset U_\eps(K_1)\},$$
where $U_\eps(K)=\bigcup_{z\in K}B(z,\eps)$ stands for an $\eps$-neighbourhood of a set.

We say that $K\Subset\RR^\ell$ is {\it computable} if there exists an algorithm $A$ which, upon input $n\in\NN$, outputs a
finite set $C_n \subset \Q^\ell$
of points with rational coordinates such that
$$\dist_H(C_n,K)<2^{-n}.$$

An equivalent, and more intuitive, way of defining a computable set is the following. Let us say that a {\it pixel} is a dyadic
cube with side $2^{-n}$ and dyadic rational vertices. A set $K$ is computable if there exists an algorithm $\A$ which given a pixel
with side $2^{-n}$ outputs $0$ if the center of the pixel is at least $2\cdot 2^{-n}$-far from $K$, outputs $1$ if the center is at most
$2^{-n}$-far from $K$, and outputs either $0$ or $1$ in the ``borderline'' case. In other words, we can visualise $K$ on a computer screen and zoom-in with arbitrarily high magnification.

In this paper
we will speak of {\it uniform computability} whenever a group of computable objects (functions, sets, etc) is computed by a single
algorithm:

\medskip
\noindent
\emph{the objects $\{x_\gamma\}_{\gamma\in\Gamma}$ are computable \defin{uniformly on
a countable set $\Gamma$} if there exists an algorithm $\cA$ with an input $\gamma\in\Gamma$, such that for all
$\gamma\in \Gamma$, $\cA_\gamma:=\cA(\gamma,\cdot)$ computes $x_{\gamma}$}.\\

For instance, a sequence $x_{n}$ of computable points is \emph{uniformly computable} if there is a single algorithm $\A$ which for every $n$ and $m$ outputs a rational number satisfying $|\A(n,m)-x_{n}|<2^{-m}$. 




Open sets can be described by means of {\it rational balls}: balls with rational centres and radii. An open set $A\subset \RR$ is called {\it lower-computable} or \emph{recursively enumerable (r.e.)} if it is the union of a computable sequence of rational balls.  A function $f$ is a \emph{computable function} on some set $S\subset \RR$ if the preimages of rational balls are uniformly lower-computable open (in $S$) sets. That is, if there are uniformly lower-computable open sets $U_n$ such that $f^{-1}(B_n) = U_n \cap S$, where $(B_n)_n$ is an enumeration of all the rational balls.  It can be verified that this definition of computability for a function $f$ is equivalent to being able to compute $f$ in the following sense:  given an arbitrarily good approximation of the input of $f$ in $S$, it is possible to algorithmically
approximate the value of $f$  with any desired precision.  Computability of functions and open sets of $\CC$, $\RR^n$, etc\dots , is defined in
a similar fashion. We refer to \cite{Wei}.

We will use the following terminology. A compact set $K$ is \emph{lower computable} if there is a sequence $x_{n}\in K$ of uniformly computable points which is dense in $K$. It is called \emph{upper-computable} if  its complement is a lower-computable open set.

\begin{example}\label{upper_comp_filled_julia} The filled Julia set $K(P)$ of a computable polynomial  $P$ on $\C$ is always upper computable. For, let $B$ be  a closed rational ball containing $K$. Then, $\C\setminus K=\bigcup_{n\in \N} P^{-n}(\C\setminus B)$ which, since $\C\setminus B$ is a recursively enumerable open set and $P$ is computable, is an upper computable set.   
\end{example}

\begin{example}\label{lower_comp_julia}The Julia set $J(P)$ is always a lower computable set. Indeed, it is not hard to see that the set of repelling periodic points of a computable polynomial $P$ can be algorithmically enumerated (periodic points are uniformly computable, as well as their multipliers) and it is well known that this is a dense subset of $J(P)$.

\end{example}

The following well known characterization of computable compact sets will be used in the sequel. 

\begin{proposition}\label{computableset}A compact set $K$ is computable if and only if it is simultaneously  lower and upper computable. \end{proposition}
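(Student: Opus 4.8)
The plan is to prove the two implications of Proposition~\ref{computableset} separately, using the definitions of lower- and upper-computability given above together with the pixel-based reformulation of computability.

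First, suppose $K$ is computable. Then there is an algorithm $\A$ which on input $n$ outputs a finite set $C_n\subset\QQ^\ell$ with $\dist_H(C_n,K)<2^{-n}$. To see that $K$ is lower computable, note that each point of $C_n$ lies within $2^{-n}$ of $K$; one can therefore post-process the outputs to produce, for each pair $(k,m)$, a point $x_{k,m}\in\QQ^\ell$ such that the family $\{x_{k,m}\}$ is dense in $K$ and each $x_{k,m}$ is within $2^{-m}$ of a fixed point of $K$. Concretely, diagonalize: enumerate the elements of $C_m$ for increasing $m$; since $\dist_H(C_m,K)\to 0$, the union $\bigcup_m C_m$ accumulates on all of $K$, and one extracts from it a uniformly computable dense sequence by standard bookkeeping (for the $k$-th point, refine the approximation as $m$ grows so that $|x_{k,m}-x_k|<2^{-m}$ for some $x_k\in K$). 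To see that $K$ is upper computable, we must show $\RR^\ell\sm K$ is a lower-computable open set, i.e. a computable union of rational balls. Using the pixel formulation: for each pixel $Q$ of side $2^{-n}$, run $\A$ to decide whether the center of $Q$ is at least $2\cdot 2^{-n}$-far from $K$; whenever the answer is ``$0$'', we have certified that a definite rational ball around that center misses $K$, and enumerating all such certified balls over all $n$ exhausts $\RR^\ell\sm K$. Hence $\RR^\ell\sm K$ is r.e.\ and $K$ is upper computable.

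Conversely, suppose $K$ is both lower and upper computable. From lower computability we have a uniformly computable dense sequence $\{x_k\}\subset K$; from upper computability we have an algorithm enumerating rational balls $B_1,B_2,\dots$ whose union is exactly $\RR^\ell\sm K$. The key point is that these two data together let us decide, for any pixel $Q$ of side $2^{-n}$, whether its center $c$ is close to or far from $K$, with the allowed slack. Run two searches in parallel. Search (i): look for some $x_k$ with $|x_k-c|<2^{-n}$; if found, output $1$ (the center is within $2^{-n}$ of $K$). Search (ii): look for finitely many of the balls $B_j$ whose union covers the closed ball $\overline{B(c,2\cdot 2^{-n})}$; if found, this closed ball is disjoint from $K$, so output $0$. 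By compactness of $\overline{B(c,2\cdot 2^{-n})}$, if this closed ball misses $K$ then such a finite subcover exists and search (ii) halts; and if the center is within $2^{-n}$ of $K$ then, by density of $\{x_k\}$, search (i) halts. Thus at least one of the two searches always halts, and in the borderline region (center between $2^{-n}$ and $2\cdot 2^{-n}$ from $K$) either may halt first, which is exactly what the definition permits. This is a correct pixel-algorithm for $K$, so $K$ is computable.

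The one genuine subtlety—the step I expect to be the main obstacle to a fully rigorous write-up—is the simultaneous-halting argument in the converse direction: one must be careful that the two parallel searches cannot \emph{both} fail to halt. This is where compactness of the closed ball $\overline{B(c,2\cdot 2^{-n})}$ is essential (an open cover of a compact set has a finite subcover), together with the elementary observation that if the center $c$ is \emph{not} within $2^{-n}$ of $K$ and \emph{not} at distance $\ge 2\cdot 2^{-n}$ from $K$, then $c$ is in the borderline band, and in that case we do not care which search wins; but if $c$ is genuinely far ($\dist(c,K)> 2\cdot 2^{-n}$, hence $\overline{B(c,2\cdot 2^{-n})}\cap K=\emptyset$ after a harmless shrink, or one works with strict inequalities throughout) search (ii) must halt, and if $c$ is genuinely close search (i) must halt. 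Handling the boundary cases cleanly—choosing open vs.\ closed balls and strict vs.\ non-strict inequalities so that the two semi-decision procedures jointly cover every pixel—is the only place where care is required; the rest is routine unwinding of definitions.
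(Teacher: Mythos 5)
The paper states Proposition~\ref{computableset} without proof, as a well-known fact, so there is no internal argument to compare against; I assess your proof on its own terms. Your overall strategy is the standard one, and the forward direction is essentially correct: extracting a uniformly computable dense sequence in $K$ from the sets $C_n$ is a routine Cauchy-sequence construction, and the pixel algorithm's $0$-outputs do yield an enumeration of rational balls exhausting $\RR^\ell\sm K$ (after a mild adjustment of radii to account for the diameter of a pixel versus its side length).

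The converse direction, however, has a genuine gap that is more than a matter of strict-versus-non-strict bookkeeping: the two search radii are chosen the wrong way around, producing a whole band of inputs on which your algorithm never halts. Search~(i) halts precisely when $\dist(c,K)<2^{-n}$, and search~(ii) halts precisely when $\dist(c,K)>2\cdot 2^{-n}$; for any $c$ with $2^{-n}\le \dist(c,K)\le 2\cdot 2^{-n}$ \emph{neither} halts, and in particular at $\dist(c,K)=2^{-n}$ (where the contract forces output~$1$) and at $\dist(c,K)=2\cdot 2^{-n}$ (where it forces output~$0$) your procedure returns nothing at all. The fix is to swap the radii so that the two halting regions overlap rather than leave a gap: in search~(i) look for $x_k$ with $|x_k-c|<2\cdot 2^{-n}$ (this halts iff $\dist(c,K)<2\cdot 2^{-n}$, and finding such $x_k$ certifies $c$ is \emph{not} at distance $\ge 2\cdot 2^{-n}$, so output~$1$ is legal); in search~(ii) look for a finite subfamily of the $B_j$ covering $\overline{B(c,2^{-n})}$ (this halts iff $\dist(c,K)>2^{-n}$, certifying $c$ is \emph{not} at distance $\le 2^{-n}$, so output~$0$ is legal). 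Since $\dist(c,K)<2\cdot 2^{-n}$ or $\dist(c,K)>2^{-n}$ always holds, one search terminates on every input, and whichever halts first gives a valid answer. The guiding principle your write-up misses is that each search should semi-decide the negation of the condition that would \emph{forbid} its output, not the condition that would \emph{mandate} it.
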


As an immediate corollary, we obtain computability of some Julia sets (see \cite{BBY1}): 

\begin{corollary}Let $P$ be a computable complex polynomial such that the filled Julia set $K(P)$ has empty interior. Then, the Julia set $J(P)=K(P)$ is computable. 
\end{corollary}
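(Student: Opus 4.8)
The plan is to combine Proposition~\ref{computableset} with the two preceding examples. By Example~\ref{lower_comp_filled_julia}, the filled Julia set $K(P)$ of a computable polynomial is upper computable; by Example~\ref{lower_comp_julia}, the Julia set $J(P)$ is lower computable, since the repelling periodic points are uniformly computable and dense in $J(P)$. So the only thing that needs an argument is that the hypothesis ``$K(P)$ has empty interior'' forces $J(P)=K(P)$, after which the object in question is simultaneously lower and upper computable and Proposition~\ref{computableset} finishes the proof.

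First I would recall that $J(P)=\partial K(P)$ by definition, and that $K(P)$ is always closed. If $K(P)$ has empty interior, then $K(P)=\overline{K(P)}\setminus \operatorname{int}(K(P)) = \partial K(P) = J(P)$; equivalently, a closed set with empty interior equals its own boundary. Hence $J(P)=K(P)$ as sets. Now apply the two examples: $J(P)=K(P)$ is upper computable (being $K(P)$) and lower computable (being $J(P)$), so by Proposition~\ref{computableset} it is computable.

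The only mild subtlety, and the place where one should be slightly careful, is the \emph{uniformity} implicit in Examples~\ref{lower_comp_filled_julia} and~\ref{lower_comp_julia}: one needs the data defining $P$ (its coefficients, as computable complex numbers, together with a rational ball $B\supset K(P)$, which can be produced from an a priori bound on $K(P)$ in terms of the coefficients) to feed the algorithms of both examples. Since $P$ is assumed computable, such $B$ is obtainable, the backward orbits $P^{-n}(\CC\setminus B)$ are uniformly lower computable, and the repelling cycles with their multipliers are uniformly computable; so both the upper and lower approximations are effective. No genuine obstacle arises here — the content is entirely in the set-theoretic identity $J(P)=K(P)$ under the empty-interior hypothesis, and the real work has already been done in the two examples and in Proposition~\ref{computableset}.
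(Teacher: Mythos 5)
Your proposal is correct and matches the paper's intended argument: the paper presents the corollary as an immediate consequence of Examples~\ref{upper_comp_filled_julia} and~\ref{lower_comp_julia} together with Proposition~\ref{computableset}, exactly as you lay out, with the only content being the elementary observation that a closed set with empty interior equals its boundary so that $J(P)=K(P)$.
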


However, in general, Julia sets need not be computable sets, as it was shown in \cite{BY}:

\begin{theorem}\label{non_comp_julia}There exists  computable parameters $\lambda$, with $|\lambda|=1$, such that the Julia set of the polynomial $\lambda z + z^2$ is not computable.  
\end{theorem}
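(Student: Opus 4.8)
The plan is to follow the strategy of \cite{BY}. Write $P_\lambda(z)=\lambda z+z^2$ with $\lambda=e^{2\pi i\theta}$, $\theta\in(0,1)$ irrational, so that $P_\lambda$ has a fixed point at the origin with multiplier $\lambda$. By Example~\ref{lower_comp_julia}, $J(P_\lambda)$ is \emph{always} a lower-computable set, so it suffices to arrange that $J(P_\lambda)$ is \emph{not} upper-computable, i.e. that its complement --- the union of the (always lower-computable) basin of infinity with the bounded Fatou components --- is not lower-computable. We keep $\theta$ a Brjuno number, so that $P_\lambda$ has a Siegel disk $\Delta_\lambda\ni 0$; the obstruction will be that no algorithm can exhibit points provably inside $\Delta_\lambda$ all the way up to its boundary. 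Concretely, if $J(P_\lambda)$ were a computable compact subset of $\CC$, then the function $z\mapsto\dist(z,J(P_\lambda))$ would be computable, hence the real number $\rho(\theta):=\dist(0,J(P_\lambda))$ --- the Euclidean inner radius of $\Delta_\lambda$, which equals $\dist(0,\partial\Delta_\lambda)$ since the component of $\CC\setminus J(P_\lambda)$ through $0$ is exactly $\Delta_\lambda$ --- would be a computable real. Our goal is therefore to produce, by an explicit algorithm, a Brjuno $\theta$ (hence a computable $\lambda$) for which $\rho(\theta)$ is \emph{not} a computable real.

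The analytic heart of the argument is a quantitative control of $\rho(\theta)$ in terms of the continued fraction $\theta=[a_1,a_2,\ldots]$, with convergent denominators $q_n$. Two facts are needed. First, a \emph{stability} estimate: if $\theta$ and $\theta'$ share a long initial segment of their continued fractions, then $\rho(\theta)$ and $\rho(\theta')$ are close; this follows from Yoccoz's estimates bounding the conformal radius of $\Delta_\lambda$ in terms of the Brjuno sum of $\theta$ (so that gently completing a truncation $[a_1,\ldots,a_n,1,1,1,\ldots]$ changes the conformal radius, hence the Euclidean inradius by the Koebe theorem, by a controlled amount) together with Buff--Ch\'eritat's continuity of the Siegel-disk radius along such sequences. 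Second, a \emph{perturbation} estimate: inserting a sufficiently large partial quotient forces $\lambda$ to lie very near a parabolic value $e^{2\pi i p_n/q_n}$, and in that near-parabolic regime one can push a repelling periodic orbit of period $q_n$ --- which lies in $J(P_\lambda)$ --- to within a prescribed distance $\delta_n>0$ of the origin, thereby \emph{decreasing} $\rho(\theta)$ to at most $\delta_n$ by an effectively computable amount, while (by stability) a later gentle completion of $\theta$ cannot undo this. This is the ``small cycles explode near the Siegel point'' mechanism; in the language of the present paper it is governed by near-parabolic / cylinder renormalization estimates.

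With these two lemmas in hand, the construction of $\theta$ is a diagonalization against all potential computable values of $\rho$, of the ``workers with shrinking budgets'' kind. Fix an enumeration of all partial computable functions $\varphi_0,\varphi_1,\ldots:\NN\to\QQ$ (candidates for fast Cauchy sequences defining computable reals). Build $\theta=[a_1,a_2,\ldots]$ in stages while maintaining a decreasing sequence of safety gaps $\gamma_0>\gamma_1>\cdots$ with $\sum_k\gamma_k<\infty$. At stage $k$, reserve a fresh index $n_k$ and watch the computation of $\varphi_k$; if and when $\varphi_k$ outputs values suggesting that it converges to within $\gamma_k$ of the current estimate of $\rho(\theta)$, \emph{act} by inserting a large partial quotient at position $n_k+1$, calibrated via the perturbation lemma to lower $\rho(\theta)$ by more than $2\gamma_k$ --- this permanently separates $\rho(\theta)$ from $\lim\varphi_k$ by at least $\gamma_k$, and by the stability lemma plus summability of the $\gamma_k$ all later actions shift $\rho$ by less than $\gamma_k$ in total, so the separation survives. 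If $\varphi_k$ never triggers, then $\rho(\theta)$ is automatically bounded away from $\lim\varphi_k$ (if that limit exists at all). Either way $\rho(\theta)\neq\varphi_k$ for every $k$, so $\rho(\theta)$ is not computable; the construction is effective, so $\theta$ and $\lambda=e^{2\pi i\theta}$ are computable, and the stability estimates keep $\theta$ Brjuno so that $\Delta_\lambda$ genuinely exists. Hence $J(P_\lambda)$ is lower- but not upper-computable, and therefore not computable, which is the claim.

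The main obstacle is the analytic perturbation/stability package of the second paragraph: turning ``inserting a large partial quotient decreases $\rho$ by a controlled amount'' into a precise, \emph{effective} statement requires genuine work in near-parabolic renormalization --- uniform control of how far a repelling $q_n$-cycle descends toward the origin in the relevant parameter range, and control of the conformal radius of $\Delta_\lambda$ along continued-fraction truncations. The diagonalization itself, once this control is in place, is routine computability theory.
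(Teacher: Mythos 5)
First, a calibration: the paper does not prove Theorem~\ref{non_comp_julia} at all --- it is imported from Braverman--Yampolsky \cite{BY} --- so the comparison is with that cited proof. Your outline is in fact a reconstruction of the BY strategy: reduce non-computability of $J(\lambda z+z^2)$ to non-computability of a radius of the Siegel disk, then build a computable Brjuno angle by an effective diagonalization on continued fractions, using Yoccoz-type estimates and the small-cycles mechanism near parabolics. Your reduction step is correct (computability of $J$ gives computability of $\dist(0,J)=\dist(0,\partial\Delta_\lambda)$), and the overall architecture is the right one.

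As a proof, however, there are genuine gaps. (1) The entire analytic content --- your ``stability'' and ``perturbation'' lemmas --- is asserted rather than proved, and you say so yourself; this is exactly where \cite{BY} spend their effort: one needs \emph{two-sided} effective bounds (upper bounds from inserting a huge partial quotient, valid uniformly over all subsequent continuations, \emph{and} lower bounds on the radius robust under admissible future continuations, plus computability of the radius at decision stages). Without the lower-bound half you cannot form the ``current estimate of $\rho(\theta)$'' against which $\varphi_k$ is tested, nor certify separation when you choose not to act. (2) The bookkeeping is flawed as stated: later actions are not small shifts --- by design they drop $\rho$ below prescribed thresholds --- so the claim that ``all later actions shift $\rho$ by less than $\gamma_k$ in total'' cannot hold, and a machine $\varphi_k$ converging to a small value (hence never triggering your criterion) may end up agreeing with $\rho(\theta)$ after later drops. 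The standard repair, essentially what BY do, is to maintain nested intervals guaranteed to contain the final value under all admissible continuations, wait until $\varphi_k$ commits to precision finer than the current interval, then steer into the half avoiding that value, with future drops calibrated by the two-sided estimates to stay inside the committed half (padding with 1's also keeps $\theta$ Brjuno despite the huge quotients). (3) Minor: the estimates are natural for the conformal radius, while you diagonalize the Euclidean inradius; Koebe transfers bounds only up to a factor $4$, so the windows must be handled multiplicatively. In short: right (cited) strategy, but the key effective lemmas and a correct priority bookkeeping are missing.
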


This result will play an essential role in the proof of our Main Theorem. 

\subsection{Some lemmas on computable maps}
Here we gather a number of elementary results in computable analysis that will be required later in the paper.  

\begin{lemma}\label{l:comp_compact} Let $K\in \C$ be a compact set. Suppose $K$ is computable. Then there exists an algorithm which takes as input any finite list of rational balls $\{B_{n_1},\dots,\B_{n_k}\}$ and halts if and only if they cover $K$.  In this case, we say that the relation $K \subset \bigcup_{i=1}^k B_{n_i}$ is \emph{semi-decidible}. 
\end{lemma}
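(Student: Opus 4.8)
The plan is to exploit the two-sided characterization of computability (Proposition~\ref{computableset}): since $K$ is computable, it is in particular \emph{upper computable}, meaning its complement $\C\setminus K$ is a lower-computable open set, i.e. a union of a computable sequence of rational balls $\{Q_j\}_{j\in\NN}$. First I would fix such an enumeration $Q_1,Q_2,\dots$ of rational balls whose union is exactly $\C\setminus K$; this enumeration is produced by an algorithm. Given an input finite list $\{B_{n_1},\dots,B_{n_k}\}$, the algorithm will run a search, over $m=1,2,3,\dots$, testing at stage $m$ whether the finitely many rational balls $B_{n_1},\dots,B_{n_k},Q_1,\dots,Q_m$ together cover some fixed large closed rational ball $\Delta$ known to contain $K$ (one can compute such a $\Delta$ from the lower-computability data, or it may be given). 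The algorithm halts precisely when such a covering is detected.

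The key point is that "a finite union of open rational balls covers a fixed compact rational ball $\Delta$" is a decidable relation: by compactness of $\Delta$ it is equivalent to a statement about finitely many rational inequalities (e.g. check that the center of every sufficiently fine dyadic pixel meeting $\Delta$ lies strictly inside one of the balls, with a margin that can be made rigorous by a Lebesgue-number argument on the rational data), so it can be tested in finite time. Thus each stage $m$ terminates.

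For correctness I would argue both directions. If $K\subset\bigcup_{i=1}^k B_{n_i}$, then $\{B_{n_i}\}\cup\{Q_j\}_{j\in\NN}$ is an open cover of $\Delta$ (since $\Delta=K\cup(\Delta\cap(\C\setminus K))$ and $\C\setminus K=\bigcup_j Q_j$), so by compactness of $\Delta$ a finite subfamily suffices; hence for some $m$ the balls $B_{n_1},\dots,B_{n_k},Q_1,\dots,Q_m$ cover $\Delta$, and the search halts. Conversely, if the search halts at stage $m$, then $B_{n_1},\dots,B_{n_k},Q_1,\dots,Q_m$ cover $\Delta\supset K$; removing the $Q_j$'s, which are disjoint from $K$, we get $K\subset\bigcup_{i=1}^k B_{n_i}$. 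So the algorithm halts if and only if the $B_{n_i}$ cover $K$, which is what semi-decidability means.

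The main obstacle — really the only subtle point — is making rigorous the claim that coverage of a compact rational ball by finitely many open rational balls is algorithmically checkable; one must avoid the pitfall of trying to decide coverage of $\Delta$ by an infinite family (which would be only semi-decidable) and instead only ever test finitely many balls at a time, and one must handle the "open vs. closed" boundary issue carefully (this is why we use \emph{open} balls $Q_j$ and a compact $\Delta$, and test strict containment with a computable safety margin). Everything else is bookkeeping with the enumeration of $\C\setminus K$ guaranteed by upper computability.
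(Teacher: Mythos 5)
Your argument is correct, though the paper itself gives no proof (``straightforward, left to the reader''), so there is no ``paper's approach'' to match. Two remarks. First, a small imprecision: you assert that ``a finite union of open rational balls covers a fixed compact rational ball $\Delta$'' is a \emph{decidable} relation, but the pixel-with-margin test you sketch only yields \emph{semi}-decidability (it halts when coverage holds, by a Lebesgue-number argument, but need not halt when coverage fails at a boundary point). That is all one needs here, provided one dovetails the two unbounded searches --- over the number $m$ of complement balls $Q_j$ and over the pixel fineness $n$ --- rather than running the finite-coverage test to completion at each fixed $m$. (Decidability of finite rational coverage is in fact true, e.g.\ by Tarski's decision procedure for real closed fields, but invoking it is unnecessary.) Second, the most direct route bypasses upper-computability and $\Delta$ entirely: from the definition of computability of $K$, one can compute finite sets $C_n\subset\Q^2$ with $\dist_H(C_n,K)<2^{-n}$; then simply search over $n$ for one in which every point of $C_n$ lies at distance $<r_i-2\cdot 2^{-n}$ from the center of some input ball $B_{n_i}=B(c_i,r_i)$. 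If this holds, every $x\in K$ is within $2^{-n}$ of some $p\in C_n$ and hence inside some $B_{n_i}$; conversely if $K\subset\bigcup B_{n_i}$ a Lebesgue number argument gives success for large $n$. This avoids the enumeration of $\C\setminus K$, the auxiliary ball $\Delta$, and the finite-coverage subroutine, and is probably what the authors had in mind when calling the proof straightforward. Your version is valid; it just takes a longer road.
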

\noindent
The proof is straightforward and will be left to the reader.

\begin{lemma}[Computable extension]\label{l:modulus}Suppose $K\subset \C$ is a lower computable compact set. Let $\phi : K \to \C$ be a continuous function which is computable on a dense collection of points in $K$ which are uniformly computable. Suppose in addition that $\phi$ has a computable modulus of continuity, that is, there is a computable function $m:(0,a) \to (0,a)$ which is non deceasing and satisfies 
$$
\lim_{\delta \to 0}m(\delta)=0 \quad \text{ and } \quad  |\phi(x)-\phi(y)| < m(|x-y|)
$$
for all $x,y$ in $K$.  Then, $\phi$ is computable. 
\end{lemma}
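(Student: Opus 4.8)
The plan is to compute $\phi(x)$ for an arbitrary $x \in K$ by approximating $x$ from the dense computable set and exploiting the modulus of continuity to control the error. First I would fix notation: let $\{p_j\}_{j\in\NN}$ be the uniformly computable sequence of points dense in $K$ on which $\phi$ is computable, so that there is a single algorithm producing, on input $(j,n)$, a rational point within $2^{-n}$ of $p_j$ and a rational point within $2^{-n}$ of $\phi(p_j)$. Since $K$ is lower computable and compact, given a target precision $2^{-n}$ we can search (by enumerating the $p_j$ and using the fact that the $\{p_j\}$ are dense) for some index $j$ with $|x - p_j|$ small; the catch is that we must \emph{certify} smallness, which we can do once we have a sufficiently good rational approximation $\hat x$ of $x$ and a rational approximation $\hat p_j$ of $p_j$, by checking $|\hat x - \hat p_j| < \text{(explicit rational bound)}$. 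Density of $\{p_j\}$ in $K$ together with $x\in K$ guarantees this search halts.

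Next I would turn the modulus of continuity into a usable quantitative tool. Given target error $2^{-n}$, first use the computability of $m$ to find a rational $\delta > 0$ with $m(\delta) < 2^{-n-1}$; this is possible because $m$ is computable and $m(\delta)\to 0$ as $\delta\to 0$ (enumerate candidate dyadic $\delta$'s and evaluate $m$ to enough precision). Then run the search of the previous paragraph to locate an index $j$ with $|x - p_j| < \delta$. Finally output a rational approximation of $\phi(p_j)$ to within $2^{-n-1}$, obtained from the algorithm computing $\phi$ on the dense set. The triangle inequality gives
$$
|\phi(x) - \phi(p_j)| < m(|x-p_j|) \le m(\delta) < 2^{-n-1},
$$
using that $m$ is nondecreasing, so the final output is within $2^{-n}$ of $\phi(x)$. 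This shows $\phi$ is computable in the "approximate the input, approximate the output" sense described in the preliminaries, which by the discussion there is equivalent to computability of $\phi$ as a function.

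The main obstacle is the certification step in the search: we are handed $x$ only through approximations, and from a finite approximation of $x$ we cannot in general decide whether $|x - p_j| < \delta$ exactly. The standard fix is to relax the test — search for $j$ such that a rational approximation $\hat p_j$ of $p_j$ (to precision, say, $\delta/4$) satisfies $|\hat x - \hat p_j| < \delta/2$, where $\hat x$ approximates $x$ to precision $\delta/4$; this simultaneously guarantees $|x - p_j| < \delta$ (so the modulus bound applies) and, by density of $\{p_j\}$ and $x \in K$, guarantees that such $j$ exists so the search terminates. One should also note that $K$ being merely lower computable (not necessarily computable) is exactly enough here: we only ever need to find points of $K$ near $x$, never to rule out that $K$ comes close to a given region. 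I would also remark that the hypothesis $m:(0,a)\to(0,a)$ with the stated monotonicity and limit properties is used only through these two features, so no further regularity of $m$ is needed.
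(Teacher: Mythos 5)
Your proposal is correct and follows essentially the same approach as the paper's (very terse) proof: approximate $x$ by a nearby point $p_j$ from the uniformly computable dense set, use the nondecreasing computable modulus $m$ to certify that $\phi(p_j)$ is close to $\phi(x)$, and output a sufficiently precise approximation of $\phi(p_j)$. You supply the details the paper leaves implicit (the relaxed certification test for $|x-p_j|<\delta$, the prior search for $\delta$ with $m(\delta)$ small, and the observation that only lower computability of $K$ is needed), but there is no methodological divergence.
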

\begin{proof} Let $\{z_n\}_{n\in \N}$ be the dense set on which $\phi$ is computable and let $z$ be any point in $K$. To compute $\phi(z)$ at a given precision, it suffices to compute $n$ such that $m(|z_n - z|)$ is small enough, and then output $\phi(z_n)$ at a sufficiently high precision.  
\end{proof}

\begin{lemma}[Computable inverse]\label{l:comp_inverse} Let $\Omega\subset \C$ be a lower computable domain and let $K$ be a computable compact set in $\Omega$. Let $f:\Omega \to f(\Omega)$ be a homeomorphism which is computable on $K$.  Then, the inverse $f^{-1}: f(K) \to K$ is also a computable homeomorphism.  
\end{lemma}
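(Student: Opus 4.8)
The plan is to reduce the statement about $f^{-1}$ to the Computable Extension Lemma (Lemma~\ref{l:modulus}) applied on the compact set $f(K)$. Two ingredients must be supplied: a dense sequence of uniformly computable points in $f(K)$ on which $f^{-1}$ is computable, and a computable modulus of continuity for $f^{-1}$ on $f(K)$.

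First I would establish that $f(K)$ is itself a computable compact set. Since $f$ is computable on $K$ and $K$ is computable, $f(K)$ is lower computable: taking any dense uniformly computable sequence $\{z_n\}$ in $K$ (which exists because $K$ is computable, hence lower computable), the points $f(z_n)$ are uniformly computable by hypothesis, and they are dense in $f(K)$ by continuity of $f$. For upper computability of $f(K)$, I would use that $f$ is continuous with a computable modulus of continuity on $K$ (this follows from computability of $f$ on a computable compact set, by the standard effective version of the Heine–Cantor theorem), so that given a pixel one can semi-decide whether it is far from $f(K)$ by checking the preimage pixels against the computable set $K$; alternatively one invokes the general fact that the image of a computable compact under a map computable on it is computable. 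Either way, $f(K)$ is computable, and in particular Proposition~\ref{computableset} and Lemma~\ref{l:comp_compact} apply to it.

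Next, the dense sequence for $f^{-1}$ is immediate: on the uniformly computable dense set $\{f(z_n)\}\subset f(K)$ we have $f^{-1}(f(z_n))=z_n$, and the $z_n$ are uniformly computable by construction. So $f^{-1}$ is computable on a dense uniformly computable subset of $f(K)$.

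The main obstacle — and the only genuine content — is producing a \emph{computable} modulus of continuity for $f^{-1}$ on $f(K)$. This is where I would need to work: uniform continuity of $f^{-1}$ on the compact set $f(K)$ is automatic, but computing a modulus requires an effective lower bound, for each rational $\eps>0$, on the quantity $\inf\{|f(x)-f(y)| : x,y\in K,\ |x-y|\ge \eps\}$. Because $f$ is a homeomorphism this infimum is strictly positive, and because $K\times K$ with the constraint $|x-y|\ge\eps$ is a computable compact set and $(x,y)\mapsto |f(x)-f(y)|$ is a computable function on it, its minimum over that set is a computable real number, uniformly in $\eps$; in particular one can algorithmically find a positive rational lower bound $\delta(\eps)$ for it. Setting $m(\delta)=\inf\{\eps : \delta(\eps)\le \delta\}$ (made non-decreasing and tending to $0$ in the obvious way) gives the required computable modulus: if $|f(x)-f(y)|<\delta(\eps)$ then necessarily $|x-y|<\eps$. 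With this modulus in hand, Lemma~\ref{l:modulus} applied to $\phi=f^{-1}$ on the lower computable (indeed computable) compact set $f(K)$ yields that $f^{-1}$ is computable, and since $f$ is also computable on $K=f^{-1}(f(K))$, $f^{-1}$ is a computable homeomorphism, completing the proof.
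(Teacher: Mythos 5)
Your proof takes a genuinely different route from the paper's. The paper's argument avoids moduli of continuity entirely: given $y\in f(K)$, it observes that since $f$ is computable on $K$ there is a uniformly r.e.\ open set $U_y$ with $f^{-1}(\Omega\setminus\{y\})\cap K = U_y\cap K = K\setminus\{x\}$, and then for any rational ball $B$ it semi-decides $x\in B$ by checking (via Lemma~\ref{l:comp_compact}) whether $B\cup U_y$ covers $K$. This pins down $x=f^{-1}(y)$ to arbitrary precision with no need to first establish computability of $f(K)$ nor to produce a modulus of continuity. Your route — make $f(K)$ computable, compute $f^{-1}$ on a dense set via $f^{-1}(f(z_n))=z_n$, extract a computable modulus of continuity for $f^{-1}$, and invoke Lemma~\ref{l:modulus} — is a reasonable alternative, and it buys an explicit modulus of continuity (which the paper does not need here but which is the natural strategy when Lemma~\ref{l:modulus} is available). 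The cost is that you have to do more work, and this is where your write-up has a genuine gap.

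The problematic step is the claim that $K_\eps := \{(x,y)\in K\times K : |x-y|\ge\eps\}$ ``is a computable compact set.'' It is certainly upper computable (its complement is r.e.\ open), but lower computability is not automatic and can fail for particular rational $\eps$ (e.g.\ when $\eps=\operatorname{diam}K$ the set may be a single hard-to-locate pair). Consequently the assertion that the minimum of $(x,y)\mapsto|f(x)-f(y)|$ over $K_\eps$ ``is a computable real number'' is not justified as stated. What you actually need — and what is true — is only lower semi-computability of that minimum: one can semi-decide, for rational $\delta>0$, whether $|f(x)-f(y)|>\delta$ for all $(x,y)\in K_\eps$, because that inclusion is equivalent to $K\times K\subset\{|f(x)-f(y)|>\delta\}\cup\{|x-y|<\eps\}$, and the right-hand side is r.e.\ open while $K\times K$ is computable compact, so Lemma~\ref{l:comp_compact} applies. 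Since the true infimum is strictly positive (as $f$ is a homeomorphism), this search eventually halts with a positive rational $\delta(\eps)$, and one then builds the modulus by the contrapositive: $|f(x)-f(y)|\le\delta(\eps)$ forces $|x-y|<\eps$. (Also note a small typo in your formula for $m$: you want $m(t)$ roughly $\inf\{\eps:\delta(\eps)\ge t\}$, not $\le$.) With these repairs your argument goes through, but as written the key quantitative step leans on an unsupported computability claim.
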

\proof
Let $y\in f(K)$ be a given point. We show how to compute $x$ from $y$ such that $f(x)=y$. The set $\Omega\setminus \{y\}$ is recursively enumerable, uniformly in $y$. Since $f$ is computable on $K$, there is a recursively open set $U_y$ such that $f^{-1}(\Omega\setminus \{y\}) = U_y \cap K$. But since $f$ is a homeomorphism,   $U_y \cap K = K \setminus \{x\}$ for some $x$. Note that now we can semi-decide whether $x \in B$ for any rational ball $B$, since this is the case if and only if $B$ together with $U_y$ form a covering of $K$. To compute $x$ at a given precision, just enumerate all balls with diameter less than this precision and semi-decide whether they contain $x$. 
\endproof

\begin{lemma}[Computable images]\label{l:non_comp_image}
Let $\Omega\subset \CC$ be a lower computable domain, and let $K\subset \Omega$ be a computable compact set. Let $f:\Omega \to f(\Omega)$ be a continuous function. Then, if $f$ is computable on $K$, its image $K'= f(K)$ is a computable compact set. 
\end{lemma}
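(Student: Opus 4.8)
The plan is to show that $K' = f(K)$ is simultaneously lower computable and upper computable, after which Proposition~\ref{computableset} finishes the argument. The lower-computability half is immediate: $K$ is computable, hence lower computable, so I would fix a uniformly computable sequence $\{x_n\}$ dense in $K$; since $f$ is computable on $K$ the points $f(x_n)$ are uniformly computable, and by continuity of $f$ they are dense in $f(K)$. (Here I use tacitly that $K'=f(K)$ is compact, being a continuous image of a compact set.)

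The real content is upper computability, i.e.\ exhibiting $\CC\setminus K'$ as a recursively enumerable open set. Since $K'$ is compact, $\CC\setminus K'$ is precisely the union of all open rational balls $B$ with $\overline B\cap K'=\emptyset$, so it is enough to semi-decide the relation ``$\overline B\cap K'=\emptyset$'', uniformly in $B$. I would base this on the equivalence
\[
\overline B\cap f(K)=\emptyset\iff K\subset f^{-1}\big(\CC\setminus\overline B\big).
\]
The open set $\CC\setminus\overline B$ is a recursively enumerable union of rational balls, uniformly in $B$, so using that $f$ is computable on $K$ one obtains a recursively enumerable open set $U_B$, uniformly in $B$, with $f^{-1}(\CC\setminus\overline B)\cap K=U_B\cap K$; consequently $K\subset f^{-1}(\CC\setminus\overline B)$ iff $K\subset U_B$. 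Writing $U_B=\bigcup_i B_i$ and feeding the finite lists $\{B_1,\dots,B_k\}$ to the algorithm of Lemma~\ref{l:comp_compact}, I can semi-decide ``$K\subset U_B$'': if it holds, compactness of $K$ forces some initial segment $\{B_1,\dots,B_k\}$ to cover $K$, and Lemma~\ref{l:comp_compact} detects it. Dovetailing these procedures over all rational balls $B$ and emitting $B$ whenever its procedure halts enumerates a family of rational balls whose union is exactly $\CC\setminus K'$.

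The step I expect to require the most care is the upper-computability argument, and within it the uniformity in $B$ of the auxiliary recursively enumerable set $U_B$. This should come out of unwinding the definition of ``$f$ is computable on $K$'' — preimages of rational balls are uniformly lower computable in $K$ — together with the elementary fact that the $f$-preimage of a union of rational balls is the union of the $f$-preimages. I would also note that, unlike Lemma~\ref{l:modulus}, no separate hypothesis on a modulus of continuity for $f$ is needed: computability of $f$ on $K$, via the equivalent ``good approximation of the input yields a good approximation of the output'' formulation recalled in the preliminaries, already supplies everything used above (both the uniform computability of the $f(x_n)$ and the construction of the sets $U_B$).
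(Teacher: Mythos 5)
Your proposal is correct and follows essentially the same route as the paper's own proof: lower computability via the pushforward of a computable dense sequence, and upper computability by reducing ``$\overline B$ disjoint from $K'$'' to ``$K$ covered by a uniformly r.e.\ open set'' and then invoking the semi-decidability of covering relations for the computable set $K$ (i.e.\ Lemma~\ref{l:comp_compact}). Your slight rephrasing, intersecting both sides with $K$ in $f^{-1}(\CC\setminus\overline B)\cap K=U_B\cap K$, is a cleaner rendering of the same step.
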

\begin{proof}
Since $K$ is in particular lower computable, we can uniformly compute a sequence of points $x_n \in K$ which is dense in $K$. The sequence $f(x_n) \in K'$ is therefore a computable sequence which is dense in $K'$. This shows that $K'$ is lower-computable. Since $K$ is upper-computable, its complement $\Omega\setminus K$ is a r.e. open set. We now show how one can enumerate a sequence of rational balls in $\CC$ whose union exhausts $\CC\setminus K'$, thus proving computability of $K'$. Let $B$ be a rational ball in $\CC$ and denote by $\cl B$ its closure. It is easy to see that $\cl B$ is disjoint from $K'$ if and only if $K'\subset \C\setminus \cl B$. Note that $\C\setminus \cl B$ is a r.e. open set.  Now,  computability of $f$  on $K$ means that for any r.e. open $U'\subset \CC$ one can uniformly lower compute a set $U\subset \CC$ satisfying $f^{-1}(U' \cap K') = U \cap K$.  In particular, this implies that a r.e. open set $U' \subset \CC$ covers $K'$ if and only if $U$ covers $K$, which is a semi-decidable relation when $K$ is computable.  It follows that we can semi-decide if a given r.e. open set $U'$ covers $K'$. This implies that we can enumerate all the balls $B$ whose closure is disjoint from $K'$, which constitutes a list of balls exhausting the complement of $K'$, and the lemma is proved. 
\end{proof}


\medskip
\noindent


\section{Preliminaries on dynamics of complex polynomials.}
In this section we introduce the tools of complex dynamics that will be used in the proof of our main result.  We refer the reader to \cite{Mil} for an in-depth introduction into the subject; the specific facts on the dynamics of $f_b$ can be found in  \cite{BH}. 

\subsection{Green's function and B{\"o}ttcher coordinate.}

Let $d$ be a positive integer larger than 1, and let~$f$ be a complex  monic polynomial of degree $d$. Denote by 
by~$K(f)$ the \emph{filled Julia set} of $f$; that is, the set of all points~$z$ in~$\C$ whose forward orbit under~$f$ is bounded in~$\C$.
The set~$K(f)$ is compact and its complement is the connected set consisting of all points whose orbit converges to infinity in the Riemann sphere.
Furthermore, we have $f^{-1}(K(f)) = K(f)$ and~$f(K(f)) = K(f)$. 
The boundary~$J(f)$ of~$K(f)$ is the \emph{Julia set of~$f$}.

Recall that the \emph{Green's function of~$K(f)$} is the function $G_f:\C \to [0,+\infty)$ that is identically~$0$ on~$K(f)$ and that for~$z$ outside~$K(f)$ is given by the limit
\begin{equation}
\label{def:Green function}
  G_f(z) = \lim_{n\rightarrow +\infty} \frac{1}{d^n} \log |f^n(z)| > 0.
\end{equation}
The function~$G_f$ is continuous, subharmonic, satisfies~$G_f \circ f= d \cdot G_f$ on~$\C$, and it is harmonic and strictly positive outside~$K(f)$.

It is easy to see that the Julia set of a complex polynomial is connected if and only if every critical point has a bounded orbit.
In this case, the unique conformal isomorphism
$$\varphi_f:\riem\setminus \overline{\DD}\longrightarrow \riem\setminus K(f)\text{ with }\varphi_f(\infty)=\infty,\text{ and }\varphi_f'(\infty)=1$$
conjugates $f$ to $z\mapsto z^d$.
It is called \emph{the (normalized) B{\"o}ttcher coordinate of~$f$ at infinity}  and satisfies $G_f = \log |\varphi_f|$.

The definition of the B{\"o}ttcher coordinate can be extended to the case of a disconnected Julia set as follows. It is well known that  $K(f)$ is connected if and only if all critical values of $f$ lie inside $K(f)$. Let $\omega$ be the critical value of $f$ such that $G_f(\omega)$ is maximal. 
Then the domain
$$ U_f := \{z\in \C \mid G_{f}(z) > G_{f}(\omega)\} $$
is homeomorphic to a punctured disk. 
We then define $\varphi_f$ as the unique conformal isomorphism
\[
 \varphi_{f}: U_{f}
\rightarrow
\{z\in \riem \mid |z| > \exp (G_{f}(\omega)) \},
\]
with $\varphi_{f}(\infty)=\infty$ and $\varphi_{f}'(\infty)=1$. It is not hard to see that $\varphi_{f}$ still conjugates~$f$ to $z \mapsto z^d$.

Let $S_f$ be the union of the critical points of $G_f$  in $\mathbb{C}\setminus K_f$ and the stable manifolds of the gradient flow of $G_f$ on $\mathbb{C}\setminus K_f$.
Denote  $V_f$ the open set $\mathbb{C}\setminus (K_f\cup S_f)$.
The  B\"ottcher coordinate  $\varphi_f$ extends to an analytic map $\varphi_f: V_f\to \mathbb{C}$ and satisfies $G_f(z) = \log |\varphi_f(z)|$ on $V_f$.

By definition, for~$v > 0$ the \emph{equipotential of level~$v$ of~$f$} is the set $G_f^{-1}(v)$.
A \emph{Green's line of~$G_f$} is a smooth curve on the complement of~$K(f)$ in~$\C$ that is orthogonal to the equipotentials of~$G_f$ and that is maximal with this property. Note that in the case when $K(f)$ is connected, every Green's line must accumulate inside the Julia set $J(f)$. If $K(f)$ is not connected, some Green's lines will terminate at escaping critical points of $f$ and their preimages.

Given~$t$ in~$\R / \Z$, the \emph{external ray of angle~$t$ of~$f$}, denoted by~$R_f(t)$, is the Green's line of~$G_f$ containing
$$ \{ \varphi_f^{-1}(r \exp(2 \pi i t)) \mid \exp(G_f(0))< r < +\infty \}. $$
By the identity~$G_f \circ f_c= d\cdot G_c$, for each~$v > 0$ and each~$t$ in~$\R / \Z$ the map~$f$ maps the equipotential~$v$ to the equipotential~$d\cdot v$ and maps~$R_f(t)$ to~$R_f(d\cdot t)$.
For~$t$ in~$\R / \Z$ the external ray~$R_f(t)$ \emph{lands at a point~$z$}, if~$G_f : R_f(t) \to (0, + \infty)$ is a bijection and if~$G_f|_{R_f(t)}^{-1}(v)$ converges to~$z$ as~$v$ converges to~$0$ in~$(0, + \infty)$.
By the continuity of~$G_f$, every landing point is in $J(f) = \partial K(f)$.

\noindent
We use the following simple fact several times.
\begin{lemma}\label{l:landing}
Let~$f$ be a complex monic polinomial of degree $d\ge 2$, let~$t$ be in~$\R / \Z$ and suppose that the external ray~$R_f(t)$ lands at a point~$z_0$ of~$K(f)$ which is not a critical value of $f$; so~$f^{-1}(z_0)$ consists of~$d$ distinct points.
Then each point of~$f^{-1}(z_0)$ is the landing point of precisely one of  the external rays~$R_f((t + k)/d)$, for $k\in \{1, 2, \ldots, d-1\}$.
\end{lemma}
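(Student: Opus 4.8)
The plan is to track the external rays landing at preimages of $z_0$ via the functional equation $f \circ \varphi_f = \varphi_f(\cdot)^d$, which holds on the domain $U_f$ (or on $V_f$ in the disconnected case), and to use the landing hypothesis at $z_0$ to propagate landing to its preimages. First I would note that since $z_0 \in K(f)$ is not a critical value, $f^{-1}(z_0) = \{w_1,\dots,w_d\}$ consists of $d$ distinct points, and since $f(K(f)) = K(f) = f^{-1}(K(f))$, each $w_j$ lies in $K(f)$ as well. The external rays $R_f((t+k)/d)$ for $k \in \{0,1,\dots,d-1\}$ (note: I would include $k=0$, i.e. $d$ rays in total, as the statement as written seems to have an off-by-one typo — with $d$ preimages one needs $d$ rays) are precisely the connected components of $f^{-1}(R_f(t) \setminus \{\text{the equipotential piece at level } \le G_f(\omega)\})$ near the Julia set, because $f$ maps $R_f(s) \to R_f(ds)$ and the solutions of $ds \equiv t \pmod 1$ are exactly $s = (t+k)/d$.

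Next I would establish that each ray $R_f((t+k)/d)$ lands, and lands at a preimage of $z_0$. For this, fix $k$ and consider the lift: on a sufficiently small equipotential neighborhood, $f$ restricted to a neighborhood of $w_j$ is a conformal isomorphism onto a neighborhood of $z_0$ (as $z_0$ is not a critical value and $w_j$ is therefore not a critical point). Since $G_f|_{R_f(t)}$ is a bijection onto $(0,\infty)$ and $G_f|_{R_f(t)}^{-1}(v) \to z_0$ as $v \to 0$, and since $G_f \circ f = d \cdot G_f$ forces $G_f|_{R_f((t+k)/d)}$ to be a bijection onto $(0,\infty)$ as well, the curve $R_f((t+k)/d)$ is mapped homeomorphically by $f$ onto the sub-ray $\{G_f > 0\}$-part of $R_f(t)$. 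Applying the local inverse branch of $f$ near each $w_j$ to the tail of $R_f(t)$, we see that the accumulation set of $R_f((t+k)/d)$ in $J(f)$ maps into $\{z_0\}$ under $f$, hence is contained in $f^{-1}(z_0) = \{w_1,\dots,w_d\}$; since accumulation sets of rays are connected and $f^{-1}(z_0)$ is discrete, this accumulation set is a single point $w_{j(k)}$, i.e. the ray lands, at $w_{j(k)}$.

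Finally I would check that the assignment $k \mapsto j(k)$ is a bijection onto $\{1,\dots,d\}$. Surjectivity: given any $w_j$, pick the local inverse branch $g_j$ of $f$ near $z_0$ with $g_j(z_0) = w_j$; applying $g_j$ to the tail of $R_f(t)$ produces an arc landing at $w_j$ which, being a piece of a Green's line mapped by $f$ onto (a tail of) $R_f(t)$, must be the tail of $R_f(s)$ for some $s$ with $ds \equiv t$, i.e. $s = (t+k)/d$ for some $k$ — so $j(k) = j$. Injectivity then follows by counting: $d$ rays mapping onto $d$ distinct points. The main obstacle — really the only delicate point — is justifying that the full curve $R_f((t+k)/d)$, not merely a germ of it near infinity, lands at $w_{j(k)}$; this requires knowing that the lift of the entire ray $R_f(t)$ under $f$ stays a single Green's line all the way down to the Julia set, which uses that no escaping critical point of $f$ lies on $R_f(t)$ or its relevant preimages — a consequence of $z_0 \in K(f)$ together with the fact that $R_f(t)$ (landing at a point of $K(f)$) cannot pass through an escaping critical point or its forward orbit. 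I would spell this connectivity argument out carefully, as it is where the hypothesis "$R_f(t)$ lands at a point of $K(f)$" is genuinely used.
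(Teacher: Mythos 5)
The paper offers no proof of Lemma~\ref{l:landing}; it is stated as a ``simple fact'', so there is no proof of record to compare against. Your outline is the standard argument: pull back the ray, use that the accumulation set of a Green's line in $J(f)$ is connected and maps into the fiber $f^{-1}(z_0)$, identify the landing points, and match them bijectively by local inverse branches and counting. You are also right that the index range should be $k\in\{0,1,\dots,d-1\}$ (so $d$ rays for $d$ preimages); as written the statement has an off-by-one. And you correctly isolate the only delicate step: one must know that each $R_f((t+k)/d)$ extends as a \emph{single} Green's line all the way to $J(f)$, i.e.\ does not terminate at a critical point of $G_f$.

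However, your proposed resolution of that step is not correct. You claim this follows because ``$R_f(t)$, landing at a point of $K(f)$, cannot pass through an escaping critical point or its forward orbit.'' The part about the critical point itself is fine (a ray that bifurcates cannot land), but the part about the forward orbit is false, and that is exactly the case that matters. If $c$ is an escaping critical point lying on $R_f((t+k)/d)$, then $f(c)$ is an escaping critical \emph{value} lying on $R_f(t)$; nothing prevents a smooth landing ray from passing through an escaping critical value. Concretely, take $P_c(z)=z^2+c$ with $c$ escaping and let $\tau$ be the external angle of the critical value $c$; if $\tau$ is irrational then $R_{P_c}(\tau)$ is smooth, passes through $c=P_c(0)$, and lands at some $z_0\in J(P_c)$ which is not a critical value, yet both $R_{P_c}(\tau/2)$ and $R_{P_c}(\tau/2+1/2)$ terminate at the critical point $0$ and land at no point of $K(P_c)$ --- so the conclusion of the lemma fails. (Your own reduction shows, correctly, that a preimage $c$ of an escaping critical point at a later iterate \emph{cannot} lie on $R_f((t+k)/d)$, since then $f^{m}(c)$ would be a critical point of $G_f$ on the smooth ray $R_f(d^{m-1}t)$; the genuine obstruction is only an escaping critical point itself.) Thus the lemma, as stated, needs an extra hypothesis such as ``no escaping critical value of $f$ lies on $R_f(t)$'' --- automatic when $K(f)$ is connected, and verified separately in the angle set $\Theta$ used in \cite{BH} where this lemma is actually applied --- and your argument must invoke that hypothesis rather than try to derive it from the landing of $R_f(t)$ alone.
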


\subsection{Dynamics of maps $z\mapsto \lambda z+bz^2+z^3$}

Let us fix a $\lambda\in S^1$, and consider the family $$f_b(z)=z+bz^2+z^3$$ as above.
For every $b$ in $\C$ the polynomial $f_b$ has two critical points (counted with multiplicity) and one indifferent fix point at 0.  It is known that the presence of this indifferent fixed point forces at least one of these critical points to have a bounded orbit. When $b^2=3\lambda$, these two critical points are equal, and therefore both have a bounded orbit. It follows that $b\in \M_\lambda$.  When $b^2\neq 3\lambda$, the two critical points are different.

In the case when $b\notin \M_\lambda$, let us denote $\omega_1$ the critical point with bounded orbit, and $\omega_2$ the other, escaping, critical point.
The critical value $f_b(\omega_2)$ has two preimages. We call  \emph{co-critical} point the preimage of $f_b(\omega_2)$ which is different from $\omega_2$, and we denote it by $\omega'_2$.

The map
\[
 \begin{array}{cccl}
  \Phi_\lambda : &\C \setminus \M_\lambda & \to & \C \setminus \overline{\D}\\
          &     b         &  \mapsto           & \Phi_\lambda(b)=\varphi_b(\omega'_2).
 \end{array}
\]
is a conformal isomorphism.
For~$v > 0$ the \emph{equipotential~$v$ of~$\M_\lambda$} is by definition
$$ \cE_\lambda(v) := \Phi_{\lambda}^{-1}(\{z\in \C \mid |z| = v \}). $$ 
On the other hand, for~$t$ in~$\R / \Z$ the set
$$ \cR_\lambda(t) := \Phi_\lambda^{-1}(\{r \exp(2 \pi i t) \mid r > 1 \}). $$
is called the \emph{external ray of angle~$t$ of~$\M_\lambda$}.
We say that $\cR_{\lambda}(t)$ \emph{lands at a point~$b$} in~$\C$ if~$\Phi^{-1} (r \exp(2\pi i t))$ converges to~$z$ as $r \searrow 1$.
When this happens~$z$ belongs to~$\partial \M_\lambda$.

 Let $b_1$ be the parameter with potential $\eta=1/3$ and external angle $\theta = 1/4$.  Let $U$ denote the open set $\{z\in \C \mid G_{b_{1}}(z)< 3G_{b_{1}}(\omega_2)\}$. Note that the equipotential of level $3G_b(\omega_2)$ is a real-analytic simple closed curve, and thus $U$ is a topological disk. The set $f_{b_1}^{-1}(U)$ is the set  $\{z\in \C          \mid         G_{b_{1}}(z)< G_{b_{1}}(\omega_2)\}$ which is bounded by a lemniscate pinching at the escaping critical point $\omega_2$. Let $U'$ be the connected component of $f_{b_1}^{-1}(U)$ that contains the non-escaping critical point $\omega_1$. We will denote by $Q_{b_1}:U' \to U$ the restriction of $f_{b_1}$ to $U'$. The filled Julia set $K(Q_{b_1})$ is defined as the set of points in $U'$ that remain in  $U'$ under iterations by $Q_{b_1}$. The Julia set $J(Q_{b_1})$ is the boundary of $K(Q_{b_1})$.  

 \noindent
 The following result, extracted from \cite{BH}, states that $J(Q_{b_1})$ is a quasi-conformal copy of $J(\lambda z + z^2)$.

 \begin{thm}\label{quasi_conformal} There exist a quasi-conformal homeomorphism $\phi: \C \to \C$ which conjugates $Q_{b_1}$ to $\lambda z + z^2$ on their Julia sets.  
 \end{thm}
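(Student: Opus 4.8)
The plan is to realize $Q_{b_1}:U'\to U$ as a polynomial-like map of degree two and then invoke the Douady--Hubbard straightening theorem. First I would check that $Q_{b_1}:U'\to U$ is indeed polynomial-like of degree $2$: the set $U$ is a Jordan domain bounded by the equipotential $\{G_{b_1}=3G_{b_1}(\omega_2)\}$, the preimage $f_{b_1}^{-1}(U)=\{G_{b_1}<G_{b_1}(\omega_2)\}$ is bounded by a figure-eight lemniscate pinched at the escaping critical point $\omega_2$, and the component $U'$ carrying the non-escaping critical point $\omega_1$ is relatively compact in $U$. Since $\omega_2\notin U'$, the map $f_{b_1}$ restricted to $U'$ has a single critical point $\omega_1$ in its domain, hence is a proper branched cover $U'\to U$ of degree $2$. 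By the straightening theorem there is a quasi-conformal homeomorphism $\psi$ of a neighbourhood of $K(Q_{b_1})$ conjugating $Q_{b_1}$ to a quadratic polynomial $z\mapsto z^2+c$ for some $c\in\C$, and this $\psi$ is a hybrid equivalence, i.e. $\bar\partial\psi=0$ almost everywhere on $K(Q_{b_1})$.

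Next I would identify the parameter $c$. The key point is that $b_1$ was chosen with external angle $\theta=1/4$ and potential $\eta=1/3$ in the parameter plane of $\M_\lambda$, and the general philosophy (made precise in \cite{BH}) is that the little copy of the quadratic family sitting at $b_1$ is tuned so that its center corresponds to the polynomial $\lambda z+z^2$ — equivalently, the quadratic-like map $Q_{b_1}$ has a fixed point of multiplier $\lambda$ at (the point corresponding to) the origin. Concretely, $f_{b_1}$ has an indifferent fixed point at $0$ with multiplier $\lambda$, this fixed point lies in $K(Q_{b_1})$, and the conjugacy $\psi$ must carry it to a fixed point of $z^2+c$ of the same multiplier $\lambda$; since $|\lambda|=1$ and $z^2+c$ has a fixed point of multiplier $\lambda$ exactly when $c=\lambda/2-\lambda^2/4$, we get that $Q_{b_1}$ is hybrid equivalent to $z^2+\lambda/2-\lambda^2/4$. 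Finally, an affine change of coordinates conjugates $z^2+\lambda/2-\lambda^2/4$ to $w\mapsto \lambda w+w^2$ (send $z=w+\lambda/2$), so composing $\psi$ with this affine map yields a quasi-conformal $\phi$ conjugating $Q_{b_1}$ to $\lambda z+z^2$ on a neighbourhood of the respective filled Julia sets, hence in particular on their Julia sets. Extending $\phi$ to a global quasi-conformal homeomorphism of $\C$ is routine (quasi-conformal maps of a Jordan domain extend, or one may quote the extension directly from \cite{BH}).

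The main obstacle is the bookkeeping that pins down the parameter $c$ exactly, rather than merely up to the combinatorics of a small Mandelbrot copy: one must verify that the non-escaping critical orbit structure of $f_{b_1}$, together with the location of the indifferent fixed point at $0$ inside $U'$, forces the straightened map to have multiplier exactly $\lambda$ at the fixed point corresponding to $0$. This in turn requires checking that the hybrid conjugacy $\psi$ genuinely fixes (the image of) $0$ and respects the multiplier — which follows because a hybrid equivalence is conformal a.e. on the filled Julia set and hence, being quasi-conformal and conformal at the indifferent fixed point in the appropriate sense, cannot change the multiplier of an indifferent periodic point. The remaining steps — properness and degree of $Q_{b_1}$, the affine normalization, and the global extension — are standard and I would treat them briefly, citing \cite{BH} and \cite{Mil} where convenient.
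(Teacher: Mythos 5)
The paper does not prove this statement itself; it is quoted verbatim from Buff--Henriksen \cite{BH}, where it is established precisely by the route you describe: exhibit $Q_{b_1}:U'\to U$ as a quadratic-like restriction, straighten it, and identify the hybrid class via the multiplier $\lambda$ of the indifferent fixed point at the origin. Your sketch reproduces that argument faithfully, including the verification that $U'\Subset U$ (the boundary of $U'$ lies on the potential-level $G_{b_1}(\omega_2)$, strictly inside the equipotential $3G_{b_1}(\omega_2)$ bounding $U$), the degree count (one simple critical point $\omega_1$ in $U'$, so degree two), and the affine normalization from $z^2+\lambda/2-\lambda^2/4$ to $\lambda z+z^2$. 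The one place you gloss is the assertion that a hybrid equivalence preserves the multiplier of the indifferent fixed point: in the Siegel case this follows from Weyl's lemma applied to the restriction of the hybrid conjugacy to the Siegel disk, and in full generality one should cite Naishul's theorem (topological conjugacies preserve the rotation number of an indifferent fixed point). With that citation supplied, the argument is complete and matches the source.
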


 \noindent
 It will not be necessary to give the definition of a quasi-conformal homeomorphism here since all what we will need is the following  standard property of such maps (see e.g. \cite{Ahlfors-book}):
 
 \begin{proposition}\label{l:holder} A quasi-conformal map $\phi$ from a topological disk $D$ into itself is H{\"o}lder-countinuous. More precisely, there exist constants $H,\alpha$ such that for every $x,y$ in $D$
 $$
 |\phi(x) - \phi(y)| \leq H |x-y|^\alpha .
 $$
 \end{proposition}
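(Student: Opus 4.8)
The plan is to derive the H\"older estimate from the quasi-invariance of the conformal modulus of ring domains, which is the classical route (see \cite{Ahlfors-book}); in the special case in which $D$ is a round disk and $\phi$ is onto, one may alternatively quote Mori's theorem directly, which yields $\alpha=1/K$ and $H=16$. I will use two standard ingredients. \emph{(i) Quasi-invariance of modulus:} if $\phi$ is $K$-quasiconformal on a domain $\Omega$ and $R\subset\Omega$ is a ring domain (a doubly connected open set), then $\phi(R)$ is again a ring domain and $\mod\phi(R)\ge \frac1K\mod R$. \emph{(ii) Two elementary modulus bounds:} the round annulus $\{r<|z-a|<s\}$ has modulus $\frac1{2\pi}\log(s/r)$; and, by Teichm\"uller's estimate, there is a universal constant $c_0$ such that every ring domain $R$ whose bounded complementary component contains two points at distance $\ell$ and lies within distance $d$ of the other complementary component satisfies $\ell\le c_0\,d\,e^{-2\pi\,\mod R}$.

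Next I would run the local estimate. The hypothesis enters in the form: $\phi$ is $K$-quasiconformal on a domain $\Omega$ containing $\overline{D}$ --- which is automatic in our application, where by \thmref{quasi_conformal} the conjugacy $\phi$ is quasiconformal on all of $\CC$, so one takes $\Omega=\CC$ and $\overline{D}$ compact. Fix $\rho>0$ with $\rho<\dist(\overline{D},\partial\Omega)$ (any $\rho$ when $\Omega=\CC$), let $D^{*}$ be the closed $\rho/2$-neighbourhood of $\overline{D}$, and set $M:=\diam\phi(D^{*})<\infty$. Let $x,y\in D$ with $0<|x-y|<\rho/4$, and consider the round annulus
$$
R:=\{\,z\in\CC\ :\ 2|x-y|<|z-x|<\rho/2\,\}\subset\Omega,
$$
whose bounded complementary component $\overline{B(x,2|x-y|)}$ contains both $x$ and $y$, and whose modulus is $\frac1{2\pi}\log\frac{\rho}{4|x-y|}$. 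Its image $\phi(R)$ is a ring domain with $\mod\phi(R)\ge\frac1{2\pi K}\log\frac{\rho}{4|x-y|}$; its bounded complementary component $E_0$ equals $\phi(B(x,2|x-y|))$, hence contains $\phi(x)$ and $\phi(y)$ and lies inside $\phi(D^{*})$, while the other complementary component contains $\phi(\partial B(x,\rho/2))\subset\phi(D^{*})$; consequently $E_0$ is within distance $M$ of it. Ingredient (ii) then gives
$$
|\phi(x)-\phi(y)|\ \le\ c_0\,M\,e^{-2\pi\,\mod\phi(R)}\ \le\ c_0\,M\,\Big(\frac{4|x-y|}{\rho}\Big)^{1/K}.
$$

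Finally I would globalize. This displayed inequality is exactly the desired H\"older bound, with $\alpha=1/K$ and $H=c_0M(4/\rho)^{1/K}$, valid whenever $|x-y|<\rho/4$; when $|x-y|\ge\rho/4$ one has trivially $|\phi(x)-\phi(y)|\le M\le M(4/\rho)^{1/K}|x-y|^{1/K}$, so after enlarging $H$ the estimate holds for all $x,y\in D$ (indeed on $\overline{D}$). The one point requiring care is that the constant produced this way would degenerate near $\partial D$ if one only knew $\phi$ on $D$ itself; this is precisely where we use that $\phi$ in \thmref{quasi_conformal} is quasiconformal on the whole plane, which lets us take $\rho$ uniformly. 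Beyond correctly invoking the Teichm\"uller modulus estimate of ingredient (ii), there is no substantial obstacle; the proposition is a textbook fact and the sketch above is only meant to record which standard facts are being used.
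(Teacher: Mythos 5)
The paper does not prove Proposition~\ref{l:holder}: it is quoted as a standard fact with a reference to \cite{Ahlfors-book}, so there is no in-text argument against which to compare yours. The modulus-based proof you sketch is the classical route and is correct: quasi-invariance of the conformal modulus of ring domains, the Teichm\"uller-type bound that the bounded complementary component of a thick ring is small relative to its distance to the other component, the round-annulus computation for the local estimate, and absorption of well-separated pairs into the trivial bound. Two small remarks. First, the estimate in your ingredient~(ii) holds with a universal $c_0$ only once $\mod R$ exceeds an absolute threshold; this is harmless here since pairs $x,y$ for which the annulus is too thin are already handled by the trivial case after enlarging $H$, but it is worth stating to keep the lemma literally true. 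Second, your closing observation is the right one to make: as literally stated, a single H\"older constant over all of $D$ is \emph{not} available for a quasiconformal map known only on $D$ (the constant degenerates at $\partial D$, and Mori's theorem covers only the surjective normalized case on a round disk); what the paper actually uses is that $\phi$ in Theorem~\ref{quasi_conformal} is quasiconformal on all of~$\CC$, which lets $\rho$ be taken uniformly and makes the constants in Lemmas~\ref{l:modulus} and~\ref{l:effective_straightening} well defined.
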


Buff and Henriksen also give a characterisation of $J(Q_{b_1})$ as the landing points of a particular set of dynamical rays that we now describe.  
Let $\Theta\subset \RR/\ZZ$ be the set of angles $\theta$ such that for every integer $n\ge 0$ we have $3^n\theta \in [0,1/2]$ mod 1. It is a Cantor set forward invariant under multiplication by $3$. It is shown in \cite{BH} that for any $\theta \in \Theta$, the dynamical ray $R_{f_{b_1}}(\theta)$ does not bifurcate, and that the set defined by
\[
X_{b_1} = \bigcup_{\theta \in \Theta} R_{f_{b_1}}(\theta)
\]
satisfies $\overline{X_{b_1}}\setminus X_{b_1}  = J(Q_{b_1}) \subset J(f_{b_1})$.

\subsection{Julia sets in $\M_{\lambda}$}

The parameter rays $\cR_\lambda(1/6)$ and $\cR_\lambda(1/3)$ both land at the parameter $b_0=4(\lambda-1)$, see \cite{BH}. The wake $\cW_0$ is defined as the connected component of 
$$
\CC\setminus (\overline{\cR_\lambda(1/6)}\cup \overline{\cR_\lambda(1/3)})
$$
containing the ray $\cR_\lambda(1/4)$. 


Every dyadic number $\vartheta=(2p+1)/2^k, k\ge 1$ and $0<2p+1<2^k$ can be expressed in a unique way as a finite sum
$$
\frac{2p+1}{2^k} = \sum_{i=1}^k \frac{\varepsilon_i}{2^i},
$$
where each $\varepsilon_i, i=1,\ldots, k$ take the value 0 or 1. We define  $\vartheta^-$ and $\vartheta^+$ by the formulae:
$$
\vartheta^- = \sum_{i=1}^k \frac{\varepsilon_i+1}{3}, \text{ and } \vartheta^+ = \vartheta^- + \frac{1}{2\cdot 3^k}.
$$

\noindent\textbf{Proposition 12 in \cite{BH}.}\textit{ Given any dyadic angle $\vartheta=(2p+1)/2^k, k\ge 1, 0<2p+1<2^k,$ 
the two parameter rays $\cR_\lambda(\vartheta^-/3)$ and $\cR_\lambda(\vartheta^+/3)$ land at a common point $b_\vartheta$.
 Moreover,
$$
f_{b_\vartheta}^{k+1}(\omega_2(b_{\vartheta}))=\beta(b_\vartheta).
$$
}

\medskip

The wake $\cW_\vartheta$ is defined as the connected component of 
$$
\CC\setminus (\overline{\cR_\lambda(\vartheta^-/3)}\cup \overline{\cR_\lambda(\vartheta^+/3)})
$$
that contains the parameter ray $\cR_\lambda(\theta)$ with $\theta$ in $]\vartheta^-/3, \vartheta^+/3[$.  We now can define $\mathcal{X}_\vartheta$ to be the set of parameter rays
$$
\mathcal{X}_\vartheta = \bigcup_{\theta\in \Theta} \cR_\lambda\left(\frac{\vartheta^-}{3} + \frac{\theta}{3^{k+1}} \right),
$$
and let $\mathcal{J}_\vartheta$ to be the set $\mathcal{J}_\vartheta = \overline{\mathcal{X}_\vartheta}\setminus \mathcal{X}_\vartheta$, where the closure is taken in $\CC$.


Let $h:\cW_0\times \CC \to \CC$  be a quasi-conformal extention  of the holomorphic motion 
$h:\cW_0 \times X_{b_1}\to \CC$ defined by $h_b(z)=\varphi_b^{-1}\circ \varphi_{b_1} (z)$. By \cite[Lemma 13]{BH} the map $H_\vartheta: \cW_0 \to \CC$ defined by
$$
H_\vartheta(b) = h_b^{-1}(f_b^{k+1}(\omega_2(b)))
$$
is locally quasi-regular, and its restriction to the dyadic wake $\cW_\vartheta$ is a locally quasi-conformal homeomorphism sending $\mathcal{J}_\vartheta$ to $J(Q_{b_1})$.


\section{Proof of the Main Theorem}

\subsection{Computable B\"ottcher's coordinate}

Let 
$$
f(z) = z^d + a_{d-1}z^{d-1} + \cdots +a_1 z + a_0
$$ 
be a polynomial of degree $d\ge 2$. Let $\varphi_f$ be the B\"ottcher's coordinate of $f$ at infinity, and, as before, let
$$V_f=\riem\setminus (K_f\cup S_f),$$
where $S_f$ is the union of the critical points of $G_f$  in $\mathbb{C}\setminus K_f$ and the stable manifolds of the gradient flow of $G_f$ on $\mathbb{C}\setminus K_f$.
The main result of this subsection is the following.

\begin{prop}[Computability of B\"ottcher's coordinate]
\label{p: computability Bottcher}
Let $f$ be a computable  monic polynomial  of degree $d\ge 2$. The open set $V_f$ is lower-computable and the B\"ottcher coordinate $\varphi_f$ is computable on $V_f$. 
\end{prop}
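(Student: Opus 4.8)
The plan is to construct $\varphi_f$ outward from infinity, where it is given by an explicit formula, and to transport it along the dynamics into $V_f$; the only genuinely delicate point will be showing that $V_f$ itself is lower‑computable.

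\emph{Green's function and the product near infinity.} First I would record that $G_f$, and also its gradient field $\nabla G_f$ on $\CC\setminus K_f$, is computable. Let $R=R(f)$ be an escape radius, computable from the coefficients, with $|z|\ge R\Rightarrow |f(z)|\ge 2|z|$ and $\big|\log|f(z)|-d\log|z|\big|\le C$; then $\big|G_f(z)-d^{-n}\log^+|f^n(z)|\big|\le C\,d^{-n}$ for all $z$ and $n$, so $G_f$ is computable with an explicit modulus of continuity. Since $G_f\circ f=d\,G_f$, the gradient $\nabla G_f$ is computable on $\CC\setminus K_f$ as well, by pulling it back from $\{|z|>R\}$ (where it is computable directly) along the iterates until $f^n(z)$ leaves $\{|z|\le R\}$. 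On $\{|z|>R\}$ all forward iterates stay in $\{|w|>R\}$, so the classical product
$$\varphi_f(z)=z\prod_{n\ge 0}\Big(\frac{f^{n+1}(z)}{f^n(z)^{d}}\Big)^{1/d^{n+1}}$$
(principal roots, each close to $1$) converges with a uniform geometric tail, and since each partial product is a computable map, $\varphi_f$ is computable on $\{|z|>R\}$.

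\emph{Lower-computability of $V_f$.} As usual it suffices to semi-decide, for a rational ball $B$, whether $\overline B\subset V_f$, i.e. whether $\overline B\cap K_f=\emptyset$ and $\overline B\cap S_f=\emptyset$. The first holds iff $\min_{\overline B}G_f>0$, which is semi-decidable from the previous step, and a certificate also yields a computable $\eta>0$ with $\overline B\subset\{G_f\ge\eta\}$. For the second I would use that the critical points of $G_f$ in $\CC\setminus K_f$ are precisely the escaping critical points of $f$ together with all of their $f$-preimages, a preimage of depth $k$ having potential $d^{-k}$ times that of the corresponding escaping critical point; hence only finitely many of them have potential $\ge\eta$, and this finite set is uniformly computable (roots of $f'$ with positive potential, then bounded-depth preimages, by solving polynomial equations). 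Each such critical point $c^{*}$ is a saddle of the gradient flow, and its stable manifold $W^{s}(c^{*})$ is a finite union of descending Green's lines; $W^{s}(c^{*})\cap\{G_f\ge\eta\}$ is a compact arc, computable with controlled error by integrating the computable field $-\nabla G_f$ starting along the (computable) stable directions at $c^{*}$ until the potential drops to $\eta$. Thus $S_f\cap\{G_f\ge\eta\}$ is a computable compact set; since $\overline B\subset\{G_f\ge\eta\}$ gives $\overline B\cap S_f=\overline B\cap\big(S_f\cap\{G_f\ge\eta\}\big)$ and disjointness of two computable compacta is semi-decidable, this shows $V_f$ is lower-computable. (Degenerate critical points and the non-generic possibility of heteroclinic connections between saddles cost only extra bookkeeping.)

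\emph{Computability of $\varphi_f$ on $V_f$.} Given $z\in V_f$ presented by approximations, I would use lower-computability of $V_f$ to search for a rational polygonal arc $\alpha$ joining a point near $z$ to some $w_0$ with $R<|w_0|<2R$ whose closed $\varepsilon$-neighbourhood lies in $V_f$; such an $\alpha$ exists because $V_f$ is open and connected (every point of $V_f$ is joined to $\{|z|>R\}$ by a path inside $V_f$, by the definition of the extension), and "closed $\varepsilon$-neighbourhood $\subset V_f$" is semi-decidable by the previous step. On a neighbourhood of $\alpha$ the function $\varphi_f$ is holomorphic with $|\varphi_f|=e^{G_f}$ bounded, so Cauchy's estimate gives a computable Lipschitz constant; subdividing $\alpha$ finely enough (a bound coming from that constant and from $G_f$ being bounded below along $\alpha$), at each node $p$ one has $\varphi_f(p)^{d^{m}}=\varphi_f(f^{m}(p))$ with $m$ the first time $f^{m}(p)\in\{|w|>R\}$, so $\varphi_f(p)$ is the unique $d^{m}$-th root of the (computable) number $\varphi_f(f^{m}(p))$ lying within half the minimal root-spacing of the value already computed at the preceding node. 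Propagating from $\varphi_f(w_0)$ along $\alpha$ then computes $\varphi_f(z)$ to arbitrary precision, with the correct branch because $\varphi_f$ is single-valued and continuous on $V_f\supset\alpha$.

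\emph{Main obstacle.} The heart of the matter is the lower-computability of $V_f$, and inside it the observation that $S_f\cap\{G_f\ge\eta\}$ is a \emph{computable} compact set: this rests on the critical points of $G_f$ above a fixed potential forming a computable finite set, and on their descending trajectories being integrable from the computable data $\nabla G_f$. Everything else is routine computable analysis in the spirit of Lemmas~\ref{l:comp_compact}--\ref{l:non_comp_image}.
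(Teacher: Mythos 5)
Your global plan (compute $\varphi_f$ explicitly near infinity, show $G_f$ and $\nabla G_f$ are computable off $K_f$, then continue inward along the dynamics/gradient structure) is the right one, and your treatment of the product near infinity and of $G_f$ matches the paper. Where you diverge is in the two harder steps. For the extension of $\varphi_f$ you propagate along a rational arc by iterating $f$ and extracting $d^{m}$-th roots branch by branch; the paper instead uses the forward gradient flow $F(z,t)$ and the observation that $\varphi_f/|\varphi_f|$ is constant along flow lines, so that $\varphi_f(z)=e^{G_f(z)}\,\varphi_f(F(z,t))/|\varphi_f(F(z,t))|$ once $|F(z,t)|>R$. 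This single formula sidesteps root-branch bookkeeping and, crucially, also yields the lower-computability of $V_f$ for free: $z\in V_f$ exactly when the (computable) flow eventually exits $\{|w|\le R\}$, which is a semi-decidable condition.

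The genuine gap is in your argument that $V_f$ is lower-computable. You want $S_f\cap\{G_f\ge\eta\}$ to be a \emph{computable} compact set, and for that you need the finite list of critical points of $G_f$ with potential $\ge\eta$, which ultimately requires the set of \emph{escaping} critical points of $f$ together with their potentials. That set is only recursively enumerable, not decidable: for a root $c$ of $f'$ lying on $J(f)$ (potential exactly $0$), there is no way to certify $G_f(c)=0$, since $K_f$ is only upper-computable and $G_f$ is only lower-semicomputable near $K_f$. Your algorithm would therefore wait forever to "finish processing" such a $c$ and would never emit the certificate $\overline B\subset V_f$, even though $\overline B$ really is inside $V_f$ (for instance when $S_f=\emptyset$ but the unique critical point sits on the Julia set). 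The paper's flow-escape criterion avoids this entirely, because it never has to classify critical points at all -- it only runs a forward ODE from $z$ and semi-decides escape. A secondary, fixable, issue in your continuation step is that the identity $\varphi_f(p)^{d^{m}}=\varphi_f(f^{m}(p))$ is justified on $U_f$ and then by analytic continuation, but you should check that $f^m(p)$ lands in the region where $\varphi_f$ is already computed (easy, since $G_f(f^m(p))=d^mG_f(p)$ grows), rather than implicitly assuming forward invariance of $V_f$, which is false in general.
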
 
The proof of this proposition will be given after the following sequence of lemmas.

\begin{lemma}
There is $R>0$ such that the   B\"ottcher's coordinate is computable on $\mathbb{C}\setminus \overline{\mathbb{D}}_R$. 
\end{lemma}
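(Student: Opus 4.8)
The idea is to produce the Böttcher coordinate on a fixed neighborhood of infinity by the standard convergent infinite product (or equivalently, the telescoping series for $\log\varphi_f$), and to observe that the uniform estimates which make this product converge are themselves computable from the coefficients of $f$.

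\medskip

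First I would recall the explicit construction. Write $f(z)=z^d(1+g(z))$, where $g(z)=a_{d-1}/z+\cdots+a_0/z^d$. For $|z|=|z|$ large, $g(z)$ is small, and one has
$$
\varphi_f(z) = z\prod_{n\ge 0}\left(1+g(f^n(z))\right)^{1/d^{n+1}},
$$
with the branch of the root chosen so that each factor is close to $1$. To make this rigorous and effective I would first fix $R$. Set $A=1+\sum_{j=0}^{d-1}|a_j|$ (a computable upper bound for the coefficient data, with the bound itself computable since the $a_j$ are computable numbers). One checks elementarily that if $|z|\ge R$ with $R\ge 2$ chosen large enough (say $R$ the smallest dyadic integer with $\sum_{j}|a_j| R^{-1}\le 1/2$, which is decidable once we have a rational upper bound for each $|a_j|$), then $|g(z)|\le 1/2$, hence $|f(z)|\ge |z|^d/2\ge |z|$, so the orbit of $z$ stays in $\{|w|\ge R\}$ and in fact $|f^n(z)|\to\infty$ super-exponentially. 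Consequently $|g(f^n(z))|\le C\,|z|^{-(d-1)d^n}$ for an explicit computable $C$, so the logarithms $\frac{1}{d^{n+1}}\log(1+g(f^n(z)))$ (principal branch, legitimate since each argument has modulus $\le 1/2 <1$) are bounded by a computable geometrically-decaying sequence. Therefore the partial products (equivalently, partial sums of $\log\varphi_f$) converge uniformly on $\{|z|>R\}$, with a computable modulus, to the genuine Böttcher coordinate — this is exactly the classical fact that this product conjugates $f$ to $z\mapsto z^d$ and is tangent to the identity at $\infty$.

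\medskip

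Second, I would turn this into a computation. Since $f$ is a computable polynomial, the iterates $f^n$ are computable uniformly in $n$ on the (rationally described, hence computable) compact annuli $\{R\le |z|\le M\}$ for each dyadic $M$; composing with the computable complex $\log$ and the computable division, each term of the series is computable uniformly in $n$ on such annuli, uniformly in the input point. Given a target precision $2^{-m}$, one uses the computable tail bound to pick $N=N(m)$ so that the tail of the series is $<2^{-m-1}$ in modulus, then evaluates the first $N$ terms each to precision $2^{-m}/(2N)$; summing and exponentiating gives $\varphi_f(z)$ to precision $2^{-m}$. On the unbounded piece $\{|z|>R\}$ the point $z=\infty$ is handled by the normalization $\varphi_f(\infty)=\infty$ and the estimate $|\varphi_f(z)-z|\le C'|z|^{-(d-2)}$ (or $\log|\varphi_f(z)/z|\to 0$), which is again a computable bound; alternatively one restricts attention, as the statement allows, to any rationally bounded part of $\CC\setminus\overline{\DD}_R$, the behavior near $\infty$ being explicit. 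Either way one gets a single algorithm computing $\varphi_f$ on $\CC\setminus\overline{\DD}_R$ in the sense of computable analysis (preimages of rational balls uniformly r.e.).

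\medskip

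The only mild obstacle is bookkeeping: one must (i) make the choice of $R$ genuinely algorithmic from the coefficient approximations — this is where one needs a rational \emph{upper} bound for each $|a_j|$, which is available since the $a_j$ are computable, and then $R$ can be taken as a power of $2$ determined by a decidable inequality; and (ii) control the branch of the root/logarithm uniformly, which is automatic here because every factor $1+g(f^n(z))$ lies in the disk $B(1,1/2)$, on which the principal logarithm is holomorphic with a computable modulus of continuity. No analytic difficulty arises; the substance is simply packaging the classical super-exponential escape estimate $|f^n(z)|\gtrsim |z|^{d^n}$ into explicit computable moduli. Hence the Böttcher coordinate is computable on $\CC\setminus\overline{\DD}_R$, as claimed, and this lemma will serve as the seed from which Proposition~\ref{p: computability Bottcher} propagates the computability inward along the gradient flow of $G_f$.
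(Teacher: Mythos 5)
Your proposal is correct and follows essentially the same route as the paper: both write $\varphi_f$ as the standard infinite product $z\prod_n(1+a_n)^{1/d^{n+1}}$ with $a_n = (f(f^n(z))-(f^n z)^d)/(f^n z)^d$, fix a computable $R$ (determined by a rational upper bound on $\sum|a_j|$) so that the factors stay in a small disk around $1$, and derive a computable geometric tail bound from the $d^{-(n+1)}$ exponents. The only difference is cosmetic — you keep the sharper super-exponential decay $|a_n|\lesssim|z|^{-(d-1)d^n}$, whereas the paper settles for the uniform bound $|a_n|\le 1/4$, which already suffices once combined with the $d^{-(n+1)}$.
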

\proof For $|z|$ sufficiently large the B\"ottcher coordinate can be written as a infinite product as follows:
$$
\varphi_f(z) = z \cdot \prod_{n=0}^{+\infty} \left(1+ \frac{f(f^{n}(z))-(f^n(z))^d}{(f^n(z))^d}\right)^{d^{-(n+1)}}.
$$
For example, if $R\ge \max\{4 \sum_{j=1}^{d-1} |a_j|, 4/3\}$, then by induction we have that for $|z|\ge R$,
$$
\frac{|f(f^{n}(z))-(f^n(z))^d|}{|f^n(z)|^d} \le \frac{1}{4},
$$
and thus, the principal value of the $d^{n+1}$-root is defined. Taking logarithm  of the absolute value one can see that the corresponding series converges and thus,
the product also converges.  
For computing the rate of convergence put 
$$a_n=\frac{f(f^{n}(z))-(f^n(z))^d}{(f^n(z))^d}.$$
Notice that 
$$
\log\frac{3}{4} \le \log |1+a_n| \le \log\frac{5}{4}.
$$
This implies that 
$$
  \left| \prod_{n=0}^{k-1}\left(1+a_n\right)^{d^{-(n+1)}}\right | \le \frac{5}{4}^{\frac{1}{d-1}},
$$
and
$$
\frac{3}{4}^{\frac{1}{d^k(d-1)}} \le  \left| \prod_{n=k}^{+\infty}\left(1+a_n\right)^{d^{-(n+1)}}\right | \le \frac{5}{4}^{\frac{1}{d^k(d-1)}}.
$$
Thus,
\begin{multline*}
  \left|\varphi_f(z)  -  z\cdot \prod_{n=0}^{k-1}\left(1+a_n\right)^{d^{-(n+1)}}\right | \\ 
   \le 
  \left| z\cdot \prod_{n=0}^{k-1}\left(1+a_n\right)^{d^{-(n+1)}}\right | \left| \prod_{n=k}^{+\infty}\left(1+a_n\right)^{d^{-(n+1)}}-1 \right | 
  \le 
  |z|\frac{5}{4}^{\frac{1}{d-1}}\left|\frac{5}{4}^{\frac{1}{d^k(d-1)}}-1\right|.
\end{multline*}

\endproof
\begin{lemma}
The Green's function $G_f$ is computable on $\mathbb{C}\setminus K_f$.
\end{lemma}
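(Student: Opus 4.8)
The strategy is to use the functional equation $G_f\circ f=d\cdot G_f$ to transport the computation into the region near infinity, where the preceding lemma already supplies a computable B\"ottcher coordinate.

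First fix a computable radius $R>0$, large enough that the preceding lemma applies and, moreover, $K_f\subset\DD_R$; such an $R$ is easily obtained from the coefficients of $f$ (for instance $R=\max\{4\sum_{j=1}^{d-1}|a_j|,\,4/3\}+1$ works, since then $|z|\ge R$ forces $|f(z)|>|z|$, so $\{|z|\ge R\}\subset\C\setminus K_f$). On $\C\setminus\overline{\DD}_R$ we have $G_f=\log|\varphi_f|$: indeed, writing $f(w)=w^d\bigl(1+a(w)\bigr)$ with $a(w)=\bigl(f(w)-w^d\bigr)/w^d$ and iterating, one obtains the telescoping identity
\[
\frac{1}{d^n}\log|f^n(z)|=\log|z|+\sum_{k=0}^{n-1}\frac{1}{d^{k+1}}\log\bigl|1+a(f^k(z))\bigr|,
\]
whose limit as $n\to\infty$ equals $\log|\varphi_f(z)|$ by the product formula in the proof of the preceding lemma, and also equals $G_f(z)$ by definition. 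Since $G_f>0$ on $\C\setminus K_f$, in particular $|\varphi_f|>1$ on $\C\setminus\overline{\DD}_R$; as $\varphi_f$ is computable there (preceding lemma) and the maps $z\mapsto|z|$ and $t\mapsto\log t$ are computable on $\C$ and on $(0,+\infty)$, the composition $G_f=\log|\varphi_f|$ is computable on $\C\setminus\overline{\DD}_R$.

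Now let $z$ be an arbitrary point of $\C\setminus K_f$. Its orbit escapes to infinity, so there is an $n$ with $|f^n(z)|>R$, and then $G_f(z)=d^{-n}G_f(f^n(z))$ with $f^n(z)\in\C\setminus\overline{\DD}_R$. This yields an algorithm computing $G_f$ on $\C\setminus K_f$: on input rational approximations to a point $z\in\C\setminus K_f$ together with a target precision $2^{-m}$, dovetail over $n=0,1,2,\dots$, computing rational approximations to $f^n(z)$ (possible since $f$ is a computable polynomial) and attempting to certify the semi-decidable condition $|f^n(z)|>R$; for the first $n$ that gets certified, run the algorithm of the previous paragraph on the point $f^n(z)$ (which is uniformly computable from $z$) to obtain a rational $v$ with $|v-G_f(f^n(z))|<2^{-m}d^{n}$, and output $d^{-n}v$. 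The procedure halts for every $z\notin K_f$ because $f^n(z)\to\infty$, and $|d^{-n}v-G_f(z)|<2^{-m}$ by the functional equation; hence $G_f$ is computable on $\C\setminus K_f$.

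The only subtlety is that the escape time $n$ is finite for each fixed $z\notin K_f$ but cannot be bounded uniformly on $\C\setminus K_f$ (no lower bound on $G_f(z)$ or on $\dist(z,K_f)$ is available to the algorithm); this is harmless, since computability on the open set $\C\setminus K_f$ only requires the computation to terminate for each individual input. Correspondingly, no claim is made about computability of $G_f$ at points of $K_f$.
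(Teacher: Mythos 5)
Your proof is correct and follows essentially the same route as the paper: both rely on the functional equation $G_f\circ f=d\cdot G_f$ and on escape to the region near infinity where the product/series representation gives an explicit rate of convergence. The only difference is presentational — you invoke the previous lemma ($\varphi_f$ computable near infinity, $G_f=\log|\varphi_f|$) as a black box and spell out the dovetailing over escape times explicitly, whereas the paper writes the same estimate directly as a tail bound $\bigl|G_f(z)-d^{-k}\log|f^k(z)|\bigr|\le\tfrac{1}{d^k(d-1)}\log\tfrac54$ valid once the orbit has entered $\{|z|\ge R\}$, leaving the reduction from a general $z\notin K_f$ to that region implicit.
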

\proof
For $z$ in $K_f$ and for every $k$ in $\mathbb{N}$ we have  
$$
G_f(z) =  \frac{1}{d^{k}}  \log|f^k(z)| + \sum_{n=k+1}^{+\infty} \log\left(\frac{|f^{n+1}(z)|}{|f^n(z)|^d}\right)^{d^{-(n+1)}} 
$$
For $R\ge \max\{4 \sum_{j=1}^{d-1} |a_j|, 4/3\}$ we have by induction that if  $|z|\ge R$ 
then
$$
\frac{3}{4}\le \frac{|f^{n+1}(z)|}{|f^n(z)|^d} \le \frac{5}{4}.
$$
This implies that 
$$
 \frac{1}{d^k(d-1)} \log\frac{3}{4} \le\left| G_f(z) -  \frac{1}{d^{k}}  \log|f^k(z)| \right| \le  \frac{1}{d^k(d-1)} \log\frac{5}{4}.
$$
\endproof

\proof[Proof of Proposition \ref{p: computability Bottcher}]

Let $R$ be a positive number such that the B\"ottcher coordinate is computable on $\mathbb{C}\setminus \overline{\mathbb{D}}_R$. 
Now consider the flow $(z,t)\to F(z,t)$ associated to the gradient vector field 
$\nabla G_f$ on the complement of $K_f$. 
Since $\nabla G_f$ is analytic the dependence on $z$ of the flow $F(z,t)$  is also analytic. 
Observe that for every $t>0$ we have that 
$$
G_f(F(z,t)) = G_f(z) + \int_{0}^{t} | \nabla G_f(F(z,s)) | ^2 ds.
$$
Thus, for every $z$ in $V_f$ there is $t\ge 0$ sufficiently large such that  $|F(z,t)|> R$. 
It follows that the map 
$$
z\in V_f\to  \exp(G_f(z)) \frac{\varphi_f (F(z,t))}{|Ê\varphi_f (F(z,t)) |}. 
$$  
is a holomorphic extension of the B\"ottcher coordinate to $V_f$ and so it must be equal to $\varphi_f$  on $V_f$.
On the other hand, since $G_f$ is computable and analytic it follows that $\nabla G_f$ is also computable and effectively locally Lipschitz 
on the complement of $K_f$ (see  \cite[Theorem 2]{pour2017computability} and \cite[Theorem 1]{gracca2009computability})  which is recursively enumerable open. 
Thus,  by \cite[Theorem 3]{gracca2009computability} for every $z\notin K_f$ the map $t\in [0,+\infty) \to F(z,t)$ is computable. This implies that we can semi-decide whether $|F(z,t)|> R$, which is equivalent to say that $V_f$
is lower-computable open. Moreover, using that $G_f$ is computable we conclude that the extension of the B\"ottcher coordinate on $V_f$ is also computable.
\endproof

\subsection{Computable external rays and their landing points}

\begin{lemma}[Computable inverse branches]\label{l:branches} Let $f$ be a computable polynomial of degree $d$ and let $\beta$ be a fixed point of $f$ which is not a critical value. Then, one can uniformly compute positive real numbers $r_0, r_1, \dots, r_{d-1}$ and points $\beta=\beta_0, \beta_1, \dots, \beta_{d-1}$ in $\C$ such that:
\begin{itemize}
\item $f(\beta_i)=\beta$ for $i=0,\dots,d-1$, 
\item the open disks $D(\beta_i,r_i)$, $i=0,\dots,d-1$ are pairwise disjoint and
\item $f$ restricted to each $D(\beta_i,r_i)$ is conformal and $f(D(\beta_i,r_i))\subset D(\beta,r_0)$ for $i=0,\dots,d-1$. 
\end{itemize}
Moreover, the  inverse branches $g_i:  f(D(\beta_i, r_i)) \subset D(\beta, r_0) \to D(\beta_i,r_i)$  of $f$ are all computable, uniformly in $i$. 
\end{lemma}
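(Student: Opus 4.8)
The idea is to locate the $d$ preimages of $\beta$ by root‑finding, to cut out conformal neighbourhoods of them whose radii come from an effective inverse function theorem, and then to obtain the inverse branches from Lemma~\ref{l:comp_inverse}. \emph{Locating the $\beta_i$.} Since $\beta$ is not a critical value, no point of $f^{-1}(\beta)$ is a critical point of $f$, so $f(z)-\beta$ is a degree‑$d$ polynomial with $d$ distinct simple roots, and these are precisely $f^{-1}(\beta)=\{\beta_0=\beta,\beta_1,\dots,\beta_{d-1}\}$. Its coefficients are computable: those of $f$ are, and $\beta$ is computable as well (e.g.\ it is a simple root of the computable polynomial $f(z)-z$, as is the case for a repelling fixed point). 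The full multiset of roots of a polynomial with computable coefficients and no multiple root is uniformly computable: for each $n$ one searches for $d$ pairwise‑disjoint rational disks of radius $2^{-n}$ on each of which the winding number of $f(z)-\beta$ equals $1$, and this search succeeds at every scale precisely because one knows a priori that $\sigma:=\min_{i\ne j}|\beta_i-\beta_j|>0$. This produces the $\beta_i$ as uniformly computable points, together with a rational lower bound $\sigma_0\in(0,\sigma]$.

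\emph{The radii and the conformal disks.} Fix a rational $\rho$ with all $\beta_i\in D(0,\rho)$. From the coefficients of $f$ we compute rational upper bounds $L\ge\sup_{D(0,\rho)}|f'|$ and $M\ge\sup_{D(0,\rho)}|f''|$ (both explicit in the $|a_k|$ and $\rho$), and a rational $m>0$ with $m\le\min_i|f'(\beta_i)|$; the latter is positive because no $\beta_i$ is critical, and computable since the $\beta_i$ are. For $r<m/M$ and $z,w\in D(\beta_i,r)$, convexity of the disk gives $|f(z)-f(w)|\ge(|f'(\beta_i)|-Mr)|z-w|\ge(m-Mr)|z-w|>0$, so $f$ is injective — hence conformal — on $D(\beta_i,r)$, and one gets the two‑sided estimate $D(\beta,(m-Mr)r)\subset f(D(\beta_i,r))\subset D(\beta,(L+Mr)r)$. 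Now pick a single dyadic $r>0$ small enough to satisfy, in addition, the finitely many strict inequalities among $r,L,M,\sigma_0$ that encode the remaining requirements of the statement (pairwise disjointness of the $D(\beta_i,r_i)$ and the prescribed inclusions with $D(\beta,r_0)$); such an $r$ exists because all these inequalities hold in the limit $r\to 0$, so one simply decreases $r$ until each is verified. Setting $r_1=\cdots=r_{d-1}=r$ and $r_0$ as dictated by the statement gives the first part of the lemma, and since every quantity above is produced by one algorithm from the coefficients of $f$, it is uniform.

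\emph{Computing the inverse branches.} Fix $i$. The open disk $D(\beta_i,r_i)$ is a lower‑computable domain, the closed disk $\overline{D(\beta_i,r_i/2)}$ is a computable compact subset of it, and $f$ — being a computable polynomial — is computable there and restricts to a homeomorphism onto its image. By Lemma~\ref{l:non_comp_image} the image $f(\overline{D(\beta_i,r_i/2)})$ is a computable compact set, and by Lemma~\ref{l:comp_inverse} the branch $g_i=f^{-1}$ is a computable homeomorphism on it; exhausting $f(D(\beta_i,r_i))$ by such compacta shows $g_i$ is computable on all of $f(D(\beta_i,r_i))$. As $d$ is fixed and every estimate is uniform, this holds uniformly in $i$.

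\emph{Main obstacle.} The one genuinely non‑formal step is the middle one: converting the inverse function theorem into a computation. This requires the quantitative injectivity/covering estimates above together with the a priori knowledge that the $\beta_i$ are distinct, which is exactly what allows one to \emph{effectively} produce the separation bound $\sigma_0$ and hence certify disjointness and injectivity at a concrete dyadic scale $r$ — without that a priori distinctness no such bound could be computed. Everything else reduces to elementary root‑finding and to Lemmas~\ref{l:comp_inverse} and \ref{l:non_comp_image}.
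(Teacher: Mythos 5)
You take a genuinely different route from the paper. The paper argues topologically: it first computes a radius $r_0$ so that $D(\beta,r_0)$ is bounded away from the finitely many computable critical values, so that $f^{-1}(D(\beta,r_0))$ is automatically a disjoint union of $d$ Jordan domains on each of which $f$ is univalent (a covering-space fact, no derivative estimates needed); it then shrinks to computable round disks $D(\beta_i,r_i)$ inside these components and delegates computability of the conformal restrictions and their inverses to Theorem~4.5 of \cite{Hertling}. Your argument is instead a quantitative effective inverse function theorem: from computable bounds $L\ge\sup|f'|$, $M\ge\sup|f''|$, $m\le\min_i|f'(\beta_i)|$ and a computable separation $\sigma_0$ between preimages, you certify injectivity, disjointness and the required inclusions at an explicit dyadic radius, and you obtain the inverse branches from the paper's own Lemmas~\ref{l:comp_inverse} and~\ref{l:non_comp_image} rather than from an external reference. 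Both are sound. What yours buys: it is fully self-contained within the paper's toolbox and produces the radii by explicit algebraic formulas in the coefficients, which would in principle give complexity bounds. What the paper's buys: it is shorter and sidesteps all second-derivative bookkeeping by exploiting the covering structure directly. Two minor remarks. Your aside that $\beta$ is computable as ``a simple root of $f(z)-z$'' tacitly uses $f'(\beta)\ne 1$, which is not among the hypotheses (though it does hold in the repelling case where the lemma is applied); the paper, like you, simply treats $\beta$ as given data. And both proofs pass silently over the $i=0$ instance of the requirement $f(D(\beta_0,r_0))\subset D(\beta,r_0)$, which cannot hold as literally written when $\beta$ is repelling; that is an imprecision of the statement, not of either proof, and does not affect how the lemma is later used.
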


\begin{proof} 
Since $\beta$ is not a critical value, it has exactly $d$ different preimages, one of which is $\beta$ (since it is fixed). Let $\beta_1,...,\beta_{d-1}$ be the other preimages.  Since  $f$ has finitely many critical values, all of which are computable, we can compute $r_0$ such that $D(\beta,r_0)$ is at some positive distance away from the collection of critical values. Then, the open set $f^{-1}(D(\beta,r_0))$ consist of exactly $d$ connected components, each of which contains one of the $\beta_i$, $i=0,\dots,d-1$. Clearly, now we can compute numbers $r_i$ such that $D_i=D(\beta_i,r_i)$ is included in the component containing $\beta_i$.  By construction, $f$ is conformal on each $D_i$. Moreover, by Theorem 4.5 from \cite{Hertling}, $f|_{D_i}:D_i \to f(D_i)\subset D(\beta,r_0)$ and its inverse $g_i: f(D_i)\to D_i$ is also computable, uniformly in $i$. The lemma is proved. 
\end{proof}

Let $\theta \in \R / \Z$ be such that $R_f(\theta)$ lands.  For an interval $I\subset (1,\infty)$ we will denote by $R_{f}^{\theta}(I)$ the \emph{ray segment} defined by
$$
R_{f}^{\theta}(I) = \varphi_f^{-1}\{ re^{2\pi i \theta} : r \in I   \}.
$$

\begin{lemma}[Effective landing]\label{l:effective_landing}   Let $f$ be a computable polynomial of degree $d$ and let $\beta$ be a fixed point of $f$ such that $|df(\beta)|>1$.  Suppose that the dynamical ray $R_f(\theta)$ lands at $\beta$ and that $\theta$ is a computable angle. Then the set $R^{\theta}_{f}(1,2] \cup \{\beta(b)\} $  is a computable compact set. 
\end{lemma}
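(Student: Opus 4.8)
The plan is to build the compact set $R^\theta_f(1,2]\cup\{\beta\}$ as a countable union of computable ray segments that shrink geometrically toward $\beta$, and then invoke Proposition~\ref{computableset} by verifying lower and upper computability separately. First I would note that since $f$ is a computable monic polynomial (after an affine change of coordinates, which is harmless and computable), Proposition~\ref{p: computability Bottcher} gives that $\varphi_f$ — hence $\varphi_f^{-1}$ on its image — is computable wherever it is defined, and in particular the ``outer'' piece $R^\theta_f[2,+\infty)$, and more generally $R^\theta_f[\delta,+\infty)$ for any computable $\delta>1$, is a computable compact arc: its endpoint $\varphi_f^{-1}(2e^{2\pi i\theta})$ is a computable point and $\varphi_f^{-1}$ restricted to the compact computable radial segment $\{re^{2\pi i\theta}:r\in[\delta,N]\}$ is computable by Lemma~\ref{l:comp_inverse} / Lemma~\ref{l:non_comp_image}, with the tail beyond $N$ controlled explicitly since $\varphi_f(z)\sim z$.

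Next I would push this down dynamically toward $\beta$. Because $\beta$ is a repelling fixed point that is not a critical value, Lemma~\ref{l:branches} produces a computable inverse branch $g=g_0$ of $f$ on a disk $D(\beta,r_0)$ fixing $\beta$, with $g(D(\beta,r_0))\Subset D(\beta,r_0)$; since $|f'(\beta)|>1$ this branch is a uniform contraction near $\beta$, so $g^n\to\beta$ uniformly on $\overline{D(\beta,r_0')}$ for a slightly smaller disk. Now $f$ maps $R_f(\theta)$ to $R_f(d\theta)$, and since $R_f(\theta)$ lands at the fixed point $\beta$, the whole ray eventually enters $D(\beta,r_0')$; choosing the (computable) potential level $\delta>1$ so that $R^\theta_f(1,\delta]$ lies in $D(\beta,r_0')$ and in the appropriate inverse-branch domain, one gets $R^\theta_f(1,\delta^{1/d^n}] = g^n\big(R^\theta_f(1,\delta]\big)$ — more precisely, the portion of the ray with potential in $(1,\delta^{1/d^{n}}]$ is obtained by applying $g$ a total of $n$ times to a bounded initial ray segment, because $g$ pulls the equipotential $v$ back to the equipotential $v^{1/d}$. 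Each application of $g$ is computable (uniformly), so each arc $R^\theta_f[\delta^{1/d^{n+1}},\delta^{1/d^{n}}]$ is a computable compact set, uniformly in $n$, and the finite unions $\{\beta\}\cup R^\theta_f[\delta^{1/d^n},2]$ are computable. Lower computability of the whole set is then immediate: the countable dense collection of computable points coming from all these arcs, together with $\beta$, is uniformly computable and dense in $R^\theta_f(1,2]\cup\{\beta\}$.

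For upper computability I would use the uniform contraction to get an \emph{effective} rate of landing: there are computable constants $C>0$, $0<\rho<1$ with $\diam\, g^n(\overline{D(\beta,r_0')})\le C\rho^n$, hence the tail arc $R^\theta_f(1,\delta^{1/d^n}]$ lies in the ball $B(\beta, C\rho^n)$. Therefore $R^\theta_f(1,2]\cup\{\beta\}$ is contained in $\big(\{\beta\}\cup R^\theta_f[\delta^{1/d^n},2]\big)$ union $B(\beta,C\rho^n)$, a computable compact set, up to an error $\to 0$ effectively; equivalently, the finite union $K_n:=\overline{B(\beta,2C\rho^n)}\cup R^\theta_f[\delta^{1/d^n},2]$ is a computable compact neighbourhood of the target set whose Hausdorff distance to it is at most $2C\rho^n\to 0$, which is precisely upper (indeed full) computability. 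Combining with lower computability and Proposition~\ref{computableset} finishes the proof. The main obstacle is the middle step: one must make the ``tail of the ray is the image of a bounded ray segment under iterated computable inverse branches, and this iteration converges at a computable geometric rate'' argument genuinely effective — i.e. pin down a computable potential level $\delta$ and a computable disk $D(\beta,r_0')$ on which a single computable inverse branch both is defined and strictly contracts, using only that $\theta$ is computable, $\beta$ is a non-critical-value repelling fixed point, and the conjugacy $\varphi_f\circ f=(\varphi_f)^d$; everything else is routine bookkeeping with Lemmas~\ref{l:comp_inverse}, \ref{l:non_comp_image}, \ref{l:branches} and Proposition~\ref{p: computability Bottcher}.
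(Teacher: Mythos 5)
Your proposal is essentially the approach the paper takes: use Proposition~\ref{p: computability Bottcher} to handle the piece of the ray away from $\beta$, and exploit the fact that a repelling fixed point is attracting for a local inverse branch $g$ to control the tail near $\beta$. You correctly identify the one genuinely non-routine step, but you flag it as ``the main obstacle'' without actually resolving it, and this is exactly where the content of the lemma lives: nothing in the hypotheses hands you a \emph{computable} potential level $\delta>1$ with $R^\theta_f(1,\delta]$ already inside the contraction domain. You cannot simply ``choose'' such a $\delta$ — you only know it exists, non-effectively, from the landing assumption.

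The paper fills this hole by a search-and-verify argument. First one builds a trapping disk $T_\beta$ (a disk around $\beta$ of explicitly computable radius, using $|f'(\beta)|>1$) whose closure is mapped strictly inside itself by $g$. Then one lower-computes the ray to \emph{find} some point $z$ on it lying in $T_\beta$. Setting $\eta(z)$ to be its potential, one considers for $i=0,1,2,\dots$ the ray segments $R_f^\theta[\eta(g^{i+1}z),\eta(g^iz)]$; each is a computable compact set, so by Lemma~\ref{l:comp_compact} the containment $R_f^\theta[\eta(g^{i+1}z),\eta(g^iz)]\subset T_\beta$ is semi-decidable, and one runs these semi-decision procedures in parallel (dovetailing) over $i$. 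Because the ray lands at $\beta$, some $i^*$ \emph{must} make the procedure halt; this is where the landing hypothesis is consumed, and it turns your non-effective ``choose $\delta$'' into an effective search with guaranteed termination. Once such an $i^*$ is found, the trapping property gives that all further $g$-iterates of that fundamental segment stay in $T_\beta$ and their diameters go to $0$, so the complement of $R^\theta_f(1,2]\cup\{\beta\}$ can be written as a computable increasing union of r.e.\ open sets, yielding upper computability. Lower computability is, as you say, immediate from computability of $\varphi_f^{-1}$ on a dense set of radial rationals. So your plan is sound in outline; to be a proof it needs the dovetailed semi-decidability argument replacing the phrase ``choosing the (computable) potential level $\delta$.''
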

\begin{proof} By Proposition \ref{p: computability Bottcher} we can compute a dense sequence of points in $R_{f}^{\theta}(1,2]$, for instance by computing the sequence $\varphi^{-1}_{f}(q_n)$ where $(q_n)$ is a computable sequence of rationals which is dense in $(1,2]$. This sequence is of course also dense in $R^{\theta}_{f}(1,2] \cup \{\beta\} $, which is therefore a lower-computable set. We now show that it is also upper computable.  Since $|df(\beta)|>1$, it follows that $\beta$ is an attracting fixed point for the the inverse branch $g=f^{-1}$ of $f$ that leaves $\beta$ fixed. Moreover, we can compute a neighbourhood $T_{\beta}$ of $\beta$ such that its closure $\overline{T_{\beta}}$ shrinks to $\{\beta\}$ under iterates by $g$. Indeed, we could take for instance $T_{\beta}$ to be the open disk centred at $\beta$ with radius $\ln(|df(\beta)|^{|df(\beta)|})$. Since $R_{f}^{\theta}(1,2]$ is lower-computable, we can find a point $z\in R_{f}^{\theta}(1,2]$ which belongs to $T_{\beta}$.  Let $\eta(z) \in (1,2]$ be the level of the equipotential line containing $z$. That is, $\varphi_{f}^{-1}(\eta(z)e^{2\pi i \theta}) = z$. Since the ray $R_{f}(\theta)$ is invariant under iterations by $g$, we have that $g^i(z) \in R_{f}^{\theta}(1,2]$, and since $\varphi_{f}$ conjugates $f$ to $z^d$, we  obtain that 
$$
\eta(g^i(z)) = ( \eta (z) )^{1/d^i}.
$$

Now, compute the ray segment $R_{f}^{\theta}[\eta(g (z)), \eta(z)]$ which goes from $g(z)$ to $z$ in $ R_{f}^{\theta}(1,2]$.  This is clearly a computable closed set. Thus, by lemma \ref{l:comp_compact}, we can semi-decide if it is contained in $T_{\beta}$. In a dovetail fashion, semi-decide whether  $R_{f}^{\theta}[\eta(g^{i+1}z),\eta(g^{i}z)]$ is contained in $T_{\beta}$, for larger and larger $i$. Since $R_{f}(\theta)$ lands at $\beta$, this procedure must eventually stop for some $i^*$. Let $z^*=g^{i^*}(z)$ and denote by $I^*$ the ray segment $R_{f}^{\theta}[\eta(g(z^*)), \eta(z^*)]$. Let $d^*_0 = \max \{ d(z,\beta): z \in I^*\}$ so that $I^*$ is completely contained in $\overline{D}(\beta,d^*_0) \subset T_{\beta}$.   Since the region $T_{\beta}$ is trapping, it follows that all the iterates of $I^*$ by $g$ are contained in $\overline{D}(\beta,d^*_0)$ and thus, so is $R_{f}^{\theta}(1,\eta(z^*)]$.  Let $d^*_i = \max \{ d(z,\beta): z \in g^i(I^*)\}$.  Since $d^*_i \to 0$ as $i\to \infty$, one has that 
$$
\C\setminus \left( R^{\theta}_{f}(1,2] \cup \{\beta\} \right)= \bigcup_{i\in\N} \C \setminus \left( R_{f}^{\theta}[\eta(g^iz^*),2] \cup  \overline{D}(\beta, d^*_i) \right).
$$ 
But the sets in the union of the right-hand side are all recursively enumerable open sets, uniformly in $i$. The lemma follows.  
\end{proof}

\begin{lemma}[Effective pairing]\label{l:rays_pairing}
Let $f$, $\theta \in \R/ \Z$ and $\beta$ be as in the previous lemma. Then, one can uniformly compute angles $\theta_1, \dots , \theta_{d-1}$  in $\R / \Z$ and points $\beta_1, \dots, \beta_{d-1}$ in $K(f)$ such that $\beta_i$ is precisely the landing point of $R_f(\theta_i)$, for $i=1,\dots,d-1$. 
\end{lemma}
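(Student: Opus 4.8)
The plan is to combine the effective landing result (Lemma~\ref{l:effective_landing}) with the computable inverse branches of Lemma~\ref{l:branches} and the topological identification provided by Lemma~\ref{l:landing}. The key observation is that Lemma~\ref{l:landing} tells us \emph{which} rays we should expect to land at the preimages of $\beta$: since $\beta=\beta_0$ is a fixed point which (being a landing point) lies in $K(f)$ and is not a critical value, its $d$ preimages $\beta_0,\beta_1,\dots,\beta_{d-1}$ are distinct, and for $k=1,\dots,d-1$ the point $\beta_k$ is the landing point of exactly one of the rays $R_f((\theta+k)/d)$. Relabelling the preimages, we may simply set $\theta_k = (\theta+k)/d$ and let $\beta_k$ be the corresponding landing point; these angles are computable since $\theta$ is. So the content of the lemma is really the \emph{computability of the points} $\beta_k$, not the identification of the angles.

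First I would invoke Lemma~\ref{l:branches} to uniformly compute the preimages $\beta_0,\dots,\beta_{d-1}$ of $\beta$, radii $r_0,\dots,r_{d-1}$, the disks $D_i=D(\beta_i,r_i)$, and the computable inverse branches $g_i\colon f(D_i)\to D_i$ with $g_i(\beta)=\beta_i$. Next, by Lemma~\ref{l:effective_landing}, the set $R_f^\theta(1,2]\cup\{\beta\}$ is a computable compact set; in particular its subset $R_f^\theta(\rho,2]\cup\{\beta\}$ is computable for any computable $\rho\in(1,2)$, and we can choose $\rho$ close enough to $1$ that the tail $R_f^\theta(1,\rho']$ of the ray is contained in $D(\beta,r_0)$ for some computable $\rho'>\rho$ — this is possible because the ray lands at $\beta$, so by the same trapping-region argument used in Lemma~\ref{l:effective_landing} one can semi-decide when a ray segment has entered the neighbourhood $D(\beta,r_0)$. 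Having found such a $\rho'$, the ray segment $S:=R_f^\theta[\rho'',\rho']$ (with $\rho''$ the $1/d$-th–type level ensuring $f$ maps this annular level structure correctly) is a computable compact subset of $D(\beta,r_0)$ on which each $g_i$ is defined and computable, so by Lemma~\ref{l:non_comp_image} each image $g_i(S)$ is a computable compact set — and each $g_i(S)$ is a sub-segment of the ray $R_f(\theta_i)$, since $\varphi_f$ conjugates $f$ to $z\mapsto z^d$ and the inverse branch through $\beta_i$ carries angle $\theta$ to angle $(\theta+i)/d$.

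Now the landing point $\beta_i$ of $R_f(\theta_i)$ can be computed exactly as in Lemma~\ref{l:effective_landing}: $\beta_i$ is an attracting fixed point for the inverse branch of $f$ fixing $\beta_i$ (its multiplier is the inverse of $df$ at $\beta_i$, which has modulus $>1$ because $\beta_i$ lies on a ray-segment that gets pulled into the shrinking disks — alternatively, $|df(\beta_i)|\ge$ something bounded below uniformly since $f'$ is computable and non-vanishing near the $\beta_i$, which we can verify effectively). Then the effective-landing machinery — pulling back a computed ray segment by the contracting inverse branch, semi-deciding containment in a computed trapping disk, and exhausting the complement by recursively enumerable open sets — applies verbatim to produce $R_{f}^{\theta_i}(1,2]\cup\{\beta_i\}$ as a computable compact set, and in particular $\beta_i$ as a computable point, uniformly in $i$.

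The main obstacle I anticipate is the bookkeeping needed to guarantee that $|df(\beta_i)|>1$ so that the inverse branch fixing $\beta_i$ is genuinely contracting at $\beta_i$ — this is what makes the effective-landing argument run. For $i\ge 1$ the point $\beta_i$ is generally not a fixed point of $f$, so ``$|df(\beta_i)|>1$'' is not an a priori hypothesis but something we need to extract; the cleanest route is to note that $\beta_i$ is a landing point of a ray whose preimage dynamics under the branch $g_i\circ(\text{branch fixing }\beta)$ is conjugate, via $\varphi_f$, to a linear contraction by $d^{-1}$ of the angle-$\theta_i$ ray, hence the relevant inverse-branch composition is eventually contracting near $\beta_i$; one then replaces ``the inverse branch of $f$ fixing $\beta_i$'' in the proof of Lemma~\ref{l:effective_landing} by this composition, which is computable by Lemma~\ref{l:branches}. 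Once that substitution is in place, everything else is a uniform repetition of the previous lemma's proof.
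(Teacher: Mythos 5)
Your proposal mislocates where the mathematical content of the lemma lies, and this leads to a genuine gap and to a long unnecessary detour. You write ``Relabelling the preimages, we may simply set $\theta_k=(\theta+k)/d$ and let $\beta_k$ be the corresponding landing point; \ldots So the content of the lemma is really the computability of the points $\beta_k$, not the identification of the angles.'' This is backwards. The $d-1$ preimages of $\beta$ other than $\beta$ itself are trivially and uniformly computable as roots of the computable polynomial $f(z)-\beta$, and the angles $(\theta+k)/d$ are trivially computable. What is not given for free --- and what the lemma actually asserts --- is the \emph{pairing}: which of the $d-1$ angles lands at which preimage. When you then invoke Lemma~\ref{l:branches} to produce $\beta_0,\dots,\beta_{d-1}$, those points are output in an uncontrolled order, and your subsequent assertion that ``the inverse branch through $\beta_i$ carries angle $\theta$ to angle $(\theta+i)/d$'' is unjustified: $g_i$ carries the ray tail at $\beta$ to \emph{some} ray $R_f((\theta+k)/d)$, but which $k$ is precisely what has to be determined. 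You never compute this correspondence because you have assumed it away by ``relabelling'' without exhibiting an algorithm that realizes the relabelling.

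Ironically, your construction is one step short of filling this gap, and that step is exactly the paper's argument. You correctly build, via the effective landing lemma and the computable branches, a computable compact arc $g_i(S)\subset D(\beta_i,r_i)$ of the ray landing at $\beta_i$. The paper then simply picks any point $z\in g_i(R)\setminus\{\beta_i\}$ and computes its external argument via $\varphi_f$ (computable on $V_f$ by Proposition~\ref{p: computability Bottcher}); the resulting angle is the desired $\theta_i$, and one can verify which $(\theta+k)/d$ it equals because those candidates are finitely many and pairwise at a computable positive distance. This final computation is what you omit. Your remaining paragraph --- re-deriving the $\beta_i$ as attracting fixed points of (compositions of) inverse branches and worrying about whether $|df(\beta_i)|>1$ --- is both unnecessary (the $\beta_i$ are already computed, and no contraction hypothesis at the $\beta_i$ is needed once the arc $g_i(S)$ is in hand) and not made rigorous (the $\beta_i$ for $i\ge 1$ are not fixed points of $f$, and the proposed ``substitution'' in the proof of Lemma~\ref{l:effective_landing} is never carried out). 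Dropping that detour and adding the external-angle computation would recover the paper's proof.
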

\begin{proof}
For $i=0, \dots, d-1$, let $\beta_i$, $r_i$ and $g_i$ as in Lemma \ref{l:branches}.  By Lemma \ref{l:landing}, each $\beta_i$ is the landing point of exactly one of the rays $R_f(\frac{\theta + k}{d})$ for $k=1,\dots,d-1$. Since the angles $\frac{\theta+k}{d}$ are all computable, all we need to do is to decide, for each $\beta_i$, which of the rays is the one landing at $\beta_i$. By Lemma \ref{l:effective_landing}, the set  $R_f^{\theta}(1,2] \cup \{\beta\}$ is a computable closed set. It is easy to see that one can compute $t$ such that $R=R_f^{\theta}(1,t] \cup \{\beta\}$ is contained in $D(\beta,r_0)$.  Now, for each $i$, the set $g_i(R)\subset D(\beta_i,r_i)$ is a computable closed set which is contained in the ray landing at $\beta_i$. To compute $\theta_i$, just choose any point $z\in g_i(R)$ different from $\beta_i$ and compute its external angle. This is $\theta_i$.
 \end{proof}

\subsection{Proof of the Main Theorem}

The proof will follow from the following two lemmas.

\begin{lemma}\label{l:effective_straightening}  Let $\phi$ be the quasi-conformal homeomorphism of Lemma \ref{quasi_conformal}, conjugating the $Q_{b_1}$ to  $P_\lambda=\lambda z + z^2$ on their Julia sets. Suppose that  $\lambda \neq 1$ is computable and $|\lambda|=1$. Then $\phi$ is computable on $J(Q_{b_1})$. 
\end{lemma}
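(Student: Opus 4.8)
The plan is to realize $\phi$ as a composition of computable maps, using the Böttcher-coordinate machinery of Proposition~\ref{p: computability Bottcher} and the characterization of $J(Q_{b_1})$ via the landing points of the ray family $X_{b_1}=\bigcup_{\theta\in\Theta}R_{f_{b_1}}(\theta)$. The starting point is that $\overline{X_{b_1}}\setminus X_{b_1}=J(Q_{b_1})$, and the analogous statement holds on the quadratic side: writing $P_\lambda=\lambda z+z^2$, the set $J(P_\lambda)$ is the set of accumulation points of the rays $R_{P_\lambda}(\theta)$, $\theta\in\Theta$, and $\phi$ carries $R_{f_{b_1}}(\theta)$ to $R_{P_\lambda}(\theta)$ for each $\theta\in\Theta$ since $\phi$ conjugates $Q_{b_1}$ to $P_\lambda$ on their Julia sets and respects the Böttcher dynamics. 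Thus $\phi$ is already determined on $X_{b_1}$ by an explicit formula, $\phi=\varphi_{P_\lambda}^{-1}\circ\varphi_{f_{b_1}}$, on the relevant rays.

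First I would establish that $\phi$ is computable on a dense set of uniformly computable points of $J(Q_{b_1})$. Fix a computable enumeration $(\theta_j)$ of a dense subset of the Cantor set $\Theta$ (the symbolic description of $\Theta$ as ``all iterates under tripling stay in $[0,1/2]$'' makes such an enumeration explicit and computable). For each $j$, the landing point $z_j$ of $R_{f_{b_1}}(\theta_j)$ lies in $J(Q_{b_1})$, and since $\theta_j$ is a preperiodic-type angle landing at a (pre)repelling point, Lemma~\ref{l:effective_landing} (applied after iterating to an appropriate fixed or periodic point, whose multiplier exceeds $1$ in modulus) shows $z_j$ is computable, uniformly in $j$; these points are dense in $J(Q_{b_1})$. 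On the quadratic side, the landing point $w_j$ of $R_{P_\lambda}(\theta_j)$ is computable uniformly in $j$ by the same argument (here $\lambda\neq1$, $|\lambda|=1$ guarantees the relevant periodic points are repelling, so Lemma~\ref{l:effective_landing} applies). Since $\phi$ sends $R_{f_{b_1}}(\theta_j)$ to $R_{P_\lambda}(\theta_j)$ and is continuous, $\phi(z_j)=w_j$. Hence $\phi$ is computable on the uniformly computable dense set $\{z_j\}\subset J(Q_{b_1})$.

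Next I would invoke the H\"older regularity from Proposition~\ref{l:holder}: $\phi$, being quasi-conformal on a topological disk containing $J(Q_{b_1})$, satisfies $|\phi(x)-\phi(y)|\le H|x-y|^\alpha$ for explicit $H,\alpha$, which gives a computable modulus of continuity $m(\delta)=H\delta^\alpha$. Since $J(Q_{b_1})$ is a lower-computable compact set (it contains the dense uniformly computable sequence $\{z_j\}$) and $\phi$ is computable on that dense set with a computable modulus of continuity, Lemma~\ref{l:modulus} (Computable extension) yields that $\phi$ is computable on $J(Q_{b_1})$, which is the claim.

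The main obstacle I anticipate is the bookkeeping in the first step: making the enumeration of $\Theta$ and the associated landing points genuinely \emph{uniform} and verifying that Lemma~\ref{l:effective_landing} applies to each $R_{f_{b_1}}(\theta_j)$ and each $R_{P_\lambda}(\theta_j)$. The lemma is stated for a ray landing directly at a repelling fixed point, whereas a typical $\theta_j\in\Theta$ lands at a point that is only preperiodic; one must pull back along computable inverse branches (Lemma~\ref{l:branches}) to a periodic cycle, check the multiplier bound, and transport computability of the landing point back, all uniformly in $j$. One also needs that $H,\alpha$ in Proposition~\ref{l:holder} — or at least \emph{some} valid pair — are explicitly available; any computable upper bound for $H$ and lower bound for $\alpha$ suffices for the modulus of continuity, so this is a mild point, but it should be stated. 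Everything else is a routine assembly of the computable-analysis lemmas already proved.
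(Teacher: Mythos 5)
Your overall strategy --- compute $\phi$ on a dense set of landing points, then extend by a H\"older modulus of continuity via Lemma~\ref{l:modulus} --- is exactly the paper's. But there is a genuine error in how you identify the dense set on the target side. You assert that $\phi$ carries $R_{f_{b_1}}(\theta)$ to $R_{P_\lambda}(\theta)$ for $\theta\in\Theta$, but this cannot be right: $\Theta$ is a Cantor subset of $[0,1/2]$ built from the \emph{tripling} map, while on the quadratic side the external angle of the ray landing at the image point is governed by \emph{doubling}. The correct correspondence is the itinerary map $\tau:\Theta\to\RR/\ZZ$ conjugating $\theta\mapsto 3\theta$ on $\Theta$ to $t\mapsto 2t$ on $\RR/\ZZ$. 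Concretely: $\theta=1/3$ lies in $\Theta$ and $R_{f_{b_1}}(1/3)$ lands at the nontrivial preimage $\beta_1$ of $\beta(b_1)$ under $Q_{b_1}$; the ray of $P_\lambda$ landing at the corresponding preimage of $\beta(\lambda)$ is $R_{P_\lambda}(1/2)$, not $R_{P_\lambda}(1/3)$ --- the latter is periodic of period two under doubling and lands at a period-two orbit, which is not $\phi(\beta_1)$. As written, your $w_j$ is the landing point of the wrong ray, so the key equality $\phi(z_j)=w_j$ fails.

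A second, smaller issue: the hypothesis only gives that $\phi$ conjugates $Q_{b_1}$ to $P_\lambda$ \emph{on their Julia sets}; nothing says $\phi$ sends external rays to external rays, so ``$\phi$ respects the B\"ottcher dynamics'' needs an argument. The paper sidesteps both issues by staying at the Julia-set level: it takes the dense set to be $\beta(b_1)$ together with all its $Q_{b_1}$-preimages, notes that $\phi$ must send these to $\beta(\lambda)$ and its $P_\lambda$-preimages (forced by the conjugacy), and resolves \emph{which} preimage goes to which by matching the external angles $0,\ 1/3,\ 1/9,\ 1/9+1/3,\dots$ on the cubic side with $0,\ 1/2,\ 1/4,\ 1/4+1/2,\dots$ on the quadratic side in cyclic order around each point (this is exactly the itinerary map $\tau$), using Lemma~\ref{l:rays_pairing} to compute the angle of the ray landing at a given preimage. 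If you replace $\theta_j$ by $\tau(\theta_j)$ on the quadratic side and justify the combinatorial matching via the orientation-preserving conjugacy rather than via a rays-to-rays claim, your argument becomes essentially the paper's.
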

\begin{proof} Recall that by Lemma \ref{quasi_conformal} there exist a quasi-conformal function $\phi : \C \to \C$ that conjugates $Q_b$ to $P(\lambda)$. We show how to compute this function on $J(Q_{b_1})$ by computing it on a dense set of points, and then invoking Lemma \ref{l:modulus} with any bound on the uniform modulus of continuity of $\phi$, which exists because of the Holder property of quasi-conformal maps (see Proposition \ref{l:holder}). The dense set will be given by the $\beta(b_1)$ fixed point of $f_{b_1}$, together with all their preimages under $Q_{b_1}$. Since $\phi$ is a conjugacy, this set is sent to the set of pre-images of the $\beta(\lambda)$ fixed point of $P_\lambda$. Since these fixed points are computable, so are the sets of preimages.  Thus, it is enough to show how to algorithmically decide, for a given preimage of $\beta(b)$, which preimage of $\beta(\lambda)$ it goes to. To achieve this, we use the external arguments of the points: on one side we start with $R_0$ (which lands at $\beta(b_1)$), whose preimages are $R_0$ and $R_{1/3}$ (which lands at $\beta_1$, the preimage of $\beta$ different from it). Then $R_{1/9}$ (which lands at one preimage of $\beta_1$) and $R_{1/9 + 1/3}$ (which lands at the other preimage of $\beta_1$) and so on. On the other side these are $R_0$, then $R_0$ and $R_{1/2}$, then $R_{1/4}$ and $R_{1/4 + 1/2}$  and so on.  By respecting the orientation, we can pair the angles on different sides. If, moreover,  we were able to pair preimages of the $\beta$ fixed point with the external ray landing at them, we could then pair the preimages of the $\beta(b)$-fixed point with the corresponding preimages of the $\beta(\lambda)$ fixed point.  But this is precisely given by Lemma \ref{l:rays_pairing}, and so the proof is finished. 
\end{proof}

\begin{lemma}\label{l:H_comp}The map $H_{\vartheta}: \mathcal{W}_0  \to \C $ is computable on the closure of $\mathcal{W}_0 \setminus \M_\lambda$. Moreover, the restriction of $H_{\vartheta}$ to the closure of $\mathcal{W}_\vartheta\setminus \M_\lambda$ has a computable inverse. 
\end{lemma}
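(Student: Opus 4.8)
The plan is to produce $H_\vartheta$ as an explicit composition of maps each of which we have already shown to be computable. Recall from the preceding subsection that
$$
H_\vartheta(b) = h_b^{-1}\bigl(f_b^{k+1}(\omega_2(b))\bigr),
$$
where $h_b(z) = \varphi_b^{-1}\circ\varphi_{b_1}(z)$ is the holomorphic motion on $X_{b_1}$, quasi-conformally extended to $\cW_0\times\CC$. First I would deal with the ingredients appearing inside the formula. The map $b\mapsto \omega_2(b)$, the escaping critical point, is algebraic in $b$ (a root of $f_b'$) and hence computable, as is $b\mapsto f_b^{k+1}(\omega_2(b))$, since $f_b$ depends computably on $b$ and iteration of a computable map is computable. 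This gives a computable map $b\mapsto w(b):=f_b^{k+1}(\omega_2(b))$ on all of $\cW_0$.

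The substantive point is the computability of the inverse holomorphic motion $b\mapsto h_b^{-1}(w(b))$ on the closure of $\cW_0\setminus\M_\lambda$. Here I would use that $h_b^{-1}=\varphi_{b_1}^{-1}\circ\varphi_b$ on the relevant sets, so that
$$
H_\vartheta(b) = \varphi_{b_1}^{-1}\Bigl(\varphi_b\bigl(w(b)\bigr)\Bigr).
$$
When $b\notin\M_\lambda$, by Proposition \ref{p: computability Bottcher} the B\"ottcher coordinate $\varphi_b$ is computable on $V_{f_b}$, uniformly in $b$ (the construction in that proposition only uses that $f_b$ is a computable monic polynomial, and all the estimates are uniform in the coefficients), and the value $w(b)$ lies in the domain of $\varphi_b$ by the choice of the wake $\cW_0$ and Proposition 12 of \cite{BH}, which identifies $f_{b_\vartheta}^{k+1}(\omega_2(b_\vartheta))$ with the repelling fixed point $\beta$. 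So $b\mapsto\varphi_b(w(b))$ is computable on $\cW_0\setminus\M_\lambda$. Composing with the fixed computable map $\varphi_{b_1}^{-1}$ (B\"ottcher coordinate of the single polynomial $f_{b_1}$, also computable by Proposition \ref{p: computability Bottcher} and invertible with computable inverse by Lemma \ref{l:comp_inverse} on compact subsets of its domain), we obtain computability of $H_\vartheta$ on $\cW_0\setminus\M_\lambda$. To pass to the closure I would invoke Lemma \ref{l:modulus}: $H_\vartheta$ is continuous on $\overline{\cW_0\setminus\M_\lambda}$ (it is locally quasi-regular on $\cW_0$ by \cite[Lemma 13]{BH}, hence in particular Hölder by Proposition \ref{l:holder}, giving a computable modulus of continuity), it is computable on the dense computable subset of non-escaping-free parameters just described, so the extension to the closure is computable.

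For the second assertion I would argue analogously using the structure established in \cite[Lemma 13]{BH}: the restriction $H_\vartheta|_{\cW_\vartheta}$ is a locally quasi-conformal homeomorphism onto a neighbourhood of $J(Q_{b_1})$, sending $\cJ_\vartheta$ to $J(Q_{b_1})$. Having shown it is computable on $\overline{\cW_\vartheta\setminus\M_\lambda}$, and knowing it is a homeomorphism there with quasi-conformal (hence Hölder, hence computable-modulus-of-continuity) inverse, I would apply Lemma \ref{l:comp_inverse} — or rather its proof, adapted to a homeomorphism between lower-computable compact sets with a computable modulus of continuity for the inverse, which is exactly the combination of Lemma \ref{l:comp_inverse} and Lemma \ref{l:modulus} — to conclude that $(H_\vartheta|_{\overline{\cW_\vartheta\setminus\M_\lambda}})^{-1}$ is computable.

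\textbf{Main obstacle.} The delicate step is verifying the uniformity of Proposition \ref{p: computability Bottcher} in the parameter $b$, i.e.\ that $\varphi_b$ is computable on $V_{f_b}$ \emph{uniformly} in $b$ ranging over $\cW_0\setminus\M_\lambda$, together with the fact that the point $w(b)=f_b^{k+1}(\omega_2(b))$ genuinely lies in the domain $V_{f_b}$ (so that it is not a critical point of $G_{f_b}$ nor on a stable manifold of the gradient flow) for $b$ in a dense computable subset of the wake — this is where Proposition 12 of \cite{BH} and the combinatorics of the wakes are used. Once uniformity is in hand, the rest is routine composition of computable maps plus the two extension/inversion lemmas.
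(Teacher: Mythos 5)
Your argument follows essentially the same route as the paper: express $H_\vartheta$ through B\"ottcher coordinates and the escaping critical orbit, invoke Proposition~\ref{p: computability Bottcher} for (uniform in $b$) computability of $\varphi_b$, use H\"older continuity of quasi-conformal maps (Proposition~\ref{l:holder}) together with Lemma~\ref{l:modulus} to extend to the closure, and appeal to Lemma~\ref{l:comp_inverse} for the inverse. The only place where you diverge is the treatment of the inverse: the paper observes that Lemma~\ref{l:comp_inverse} cannot be applied directly to $\overline{\cW_\vartheta\setminus\M_\lambda}$ because that closure need not be a \emph{computable} compact set, and works around this by exhausting the r.e.\ open set $\cW_\vartheta\setminus\M_\lambda$ by a sequence of computable compact sets, applying Lemma~\ref{l:comp_inverse} on each, and then extending by H\"older continuity. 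You instead propose to bypass Lemma~\ref{l:comp_inverse} and compute the inverse on the image of a dense computable subset and then invoke Lemma~\ref{l:modulus} with the H\"older modulus of the inverse; this is equally valid and somewhat cleaner, since the semi-decision step in the proof of Lemma~\ref{l:comp_inverse} genuinely requires upper-computability of $K$, and the modulus-of-continuity route sidesteps that requirement entirely. One small caution: where you write that you would ``adapt the proof of Lemma~\ref{l:comp_inverse} to lower-computable compact sets,'' that adaptation by itself would fail (the covering test in that proof needs $K$ computable); it is really Lemma~\ref{l:modulus} doing the work, which your final phrasing does acknowledge.
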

\begin{proof}
 By Proposition \ref{p: computability Bottcher}, the mappings $\varphi_b$, $\varphi_{b_1}$ and their inverses are computable on their domains $U_b, U_{b_1}$, uniformly in $b$ and $b_1$. Hence,  holomorphic motion $h = \varphi^{-1}_b \circ \varphi_{b_1}: U_{b_1} \to  U_{b}$ is computable too. Recall that $H_{\vartheta} = h^{-1}(f^{k+1}(\omega_2(b)))$. Since $f^{k+1}(\omega_2(b)) \in U_b$ for $b \notin \M_\lambda$, to prove computability of the map $H_{\vartheta}$  on $\mathcal{W}_0\setminus \M_\lambda$ it is enough to show that the critical point $\omega_2(b)$ is computable from $b$. But the collection of critical points is always computable from $b$, and we can identify the escaping one.  Now, since $H_{\vartheta}$ is quasi-conformal, it has the holder property on some large enough ball and therefore, by Lemma \ref{l:holder}, its computability can be extended  up to the closure of $\mathcal{W}_0\setminus \M_\lambda$.  It follows that the restriction of $H_{\vartheta}$ to the closure of $\mathcal{W}_\vartheta\setminus \M_\lambda$ is computable homeomorphism.  
Computable inverse will follow from Lemma \ref{l:comp_inverse}. However, note that we can not apply it directly to the closure of $\mathcal{W}_0\setminus \M_\lambda$ because it may not be a computable set (it may not be upper-computable). Instead, we first note that since $\mathcal{W}_0\setminus \M_\lambda$ is a recursively enumerable open set, we can produce a sequence of computable compact sets whose union equals $\mathcal{W}_0\setminus \M_\lambda$.  We can then apply  Lemma \ref{l:comp_inverse} to each of these set, which proves that the inverse is computable on $H_{\vartheta}(\mathcal{W}_0\setminus \M_\lambda)$. But we can now apply  Lemma \ref{l:holder} to this inverse, which proves that its computability can be extended to the closure, as was to be shown. 
 \end{proof}

\begin{lemma}\label{l:non_comp_subset}
If $\partial M_{\lambda}$ is upper-computable, then so is $\mathcal{J}_{\vartheta}$.
\end{lemma}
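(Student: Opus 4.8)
The plan is to show that $\mathcal{J}_\vartheta$ is obtained from $\partial\M_\lambda$ by intersecting with a computable compact set, then mapping through a computable homeomorphism, so that upper-computability is preserved. First I would localize: the set $\mathcal{J}_\vartheta$ is, by the discussion preceding Proposition~12 of \cite{BH}, a subset of $\overline{\cW_\vartheta}$, and in fact it is the set of landing points of the parameter rays $\cR_\lambda(\vartheta^-/3 + \theta/3^{k+1})$, $\theta\in\Theta$. Since these rays all land in $\partial\M_\lambda$, we have $\mathcal{J}_\vartheta \subset \partial\M_\lambda \cap \overline{\cW_\vartheta}$. The boundary rays $\cR_\lambda(\vartheta^-/3)$ and $\cR_\lambda(\vartheta^+/3)$ of the wake are computable ray segments with computable landing point $b_\vartheta$ (by Proposition~12 together with the effective landing and pairing lemmas, Lemmas~\ref{l:effective_landing} and~\ref{l:rays_pairing}, applied in parameter space), so I can compute a closed ``truncated wake'' region $\overline{W}$ — bounded by these two ray segments up to some computable equipotential level and by a piece of that equipotential — which is a computable compact set containing $\mathcal{J}_\vartheta$ in its interior relative to $\overline{\cW_\vartheta}$, and whose boundary meets $\partial\M_\lambda$ only at $b_\vartheta$.

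Next I would use the map $H_\vartheta$ of Lemma~\ref{l:H_comp}: its restriction to $\overline{\cW_\vartheta\setminus\M_\lambda}$ is a computable homeomorphism with computable inverse, and by the cited \cite[Lemma 13]{BH} it sends $\mathcal{J}_\vartheta$ onto $J(Q_{b_1})$. The key point is that $J(Q_{b_1})$ is computable: it is lower-computable because $\overline{X_{b_1}}\setminus X_{b_1} = J(Q_{b_1})$ and the rays $R_{f_{b_1}}(\theta)$, $\theta\in\Theta$, are computable and land (so repelling periodic points, or equipotential-truncated ray endpoints, give a uniformly computable dense set); and it is upper-computable by the same kind of external-ray argument, or one can invoke that $J(Q_{b_1})$ is a quasiconformal copy of the computable Julia set $J(\lambda z+z^2)$ via the effectively computable straightening $\phi$ of Lemma~\ref{l:effective_straightening} together with Lemma~\ref{l:non_comp_image} — though one must be careful here, since Theorem~\ref{non_comp_julia} asserts $J(\lambda z+z^2)$ is \emph{not} always computable, so this last route is not available and I would use instead only the lower-computability plus a direct upper bound, or simply take $J(Q_{b_1})$ to be upper-computable directly from the polynomial-like structure. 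The cleanest statement to extract is: $J(Q_{b_1})$ is a computable compact set, hence in particular upper-computable.

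Now I would assemble the argument. Suppose $\partial\M_\lambda$ is upper-computable, i.e. $\CC\setminus\partial\M_\lambda$ is lower-computable (recursively enumerable open). Then $\partial\M_\lambda \cap \overline{W}$ is upper-computable: its complement in $\overline{W}$ is $(\CC\setminus\partial\M_\lambda)\cap\overline{W}$, which is recursively enumerable open relative to $\overline{W}$ since $\overline{W}$ is computable. Because $\partial\M_\lambda\cap\overline{W}$ is contained in the lower-computable domain $\cW_\vartheta$ together with the single computable point $b_\vartheta$, and $H_\vartheta$ is computable on $\overline{\cW_\vartheta\setminus\M_\lambda}$ with a computable inverse, I can transport this set: writing $\mathcal{J}_\vartheta = H_\vartheta^{-1}\big(J(Q_{b_1})\big)$ and noting $\mathcal{J}_\vartheta \subset \partial\M_\lambda\cap\overline{W}$, the set $\mathcal{J}_\vartheta$ equals the intersection of the upper-computable set $\partial\M_\lambda\cap\overline{W}$ with the upper-computable set $H_\vartheta^{-1}(J(Q_{b_1}))$ — the latter being upper-computable because $H_\vartheta$ has a computable inverse and $J(Q_{b_1})$ is computable (apply Lemma~\ref{l:non_comp_image} to $H_\vartheta^{-1}$ on computable exhausting compacts of $\cW_\vartheta\setminus\M_\lambda$, plus the Hölder extension to the closure as in Lemma~\ref{l:H_comp}). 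An intersection of two upper-computable sets is upper-computable (the union of the two r.e. open complements is r.e. open), so $\mathcal{J}_\vartheta$ is upper-computable.

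\textbf{Main obstacle.} The delicate point is the interplay between the sets' being taken relative to the non-computable set $\partial\M_\lambda$ versus absolutely in $\CC$: $\overline{\cW_\vartheta\setminus\M_\lambda}$ need not be a computable compact set (it may fail to be upper-computable, exactly the phenomenon driving the whole paper), so I cannot apply Lemma~\ref{l:comp_inverse} or Lemma~\ref{l:non_comp_image} directly on the closure. The fix, as in the proof of Lemma~\ref{l:H_comp}, is to exhaust $\cW_\vartheta\setminus\M_\lambda$ by computable compact sets, run the computable-inverse/computable-image machinery there, and then push computability out to the closure using the Hölder modulus of continuity of the quasiconformal map $H_\vartheta$ (Proposition~\ref{l:holder}) via the computable-extension Lemma~\ref{l:modulus}. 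Getting the bookkeeping of ``relative to $\overline{W}$'' correct so that the single boundary point $b_\vartheta$ does not cause trouble is the part requiring the most care.
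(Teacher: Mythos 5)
Your argument has a fatal gap, and it is exactly the point you flagged yourself and then talked yourself out of: the claim that $J(Q_{b_1})$ is a computable (in particular upper-computable) compact set is false, and in fact it is the negation of the paper's central conclusion. By Lemma~\ref{l:effective_straightening} the straightening $\phi$ is computable on $J(Q_{b_1})$ with $\phi(J(Q_{b_1}))=J(\lambda z+z^2)$; if $J(Q_{b_1})$ were computable, Lemma~\ref{l:non_comp_image} would make $J(\lambda z+z^2)$ computable, which is precisely what Theorem~\ref{non_comp_julia} rules out for the $\lambda$ at hand. This is spelled out in the proof of the final theorem: ``the Julia set $J_{b_1}$ of $Q_{b_1}$ is not computable either.'' The ``polynomial-like structure'' route you fall back on does not help: what one gets from the polynomial-like structure is upper-computability of the \emph{filled} Julia set $K(Q_{b_1})$ (as in Example~\ref{upper_comp_filled_julia}), but $K(Q_{b_1})$ has a Siegel disk in its interior, and its boundary $J(Q_{b_1})$ is lower-computable (Example~\ref{lower_comp_julia}); if it were also upper-computable, Proposition~\ref{computableset} would make it computable, a contradiction. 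No unconditional route to upper-computability of $J(Q_{b_1})$ can exist.

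Once that claim collapses, so does your proof, because your argument really is ``$\mathcal{J}_\vartheta=H_\vartheta^{-1}(J(Q_{b_1}))$ is the computable preimage of a computable set'' — the intersection with $\partial\M_\lambda\cap\overline{W}$ is redundant since $\mathcal{J}_\vartheta$ is already contained in it. In particular, the hypothesis that $\partial\M_\lambda$ is upper-computable plays no essential role in your reasoning; a proof of this lemma that doesn't genuinely use the hypothesis should be a red flag, since the conclusion is false unconditionally. The paper's proof uses the hypothesis in an essential way: it enumerates $\CC\setminus\mathcal{J}_\vartheta$ as the union of three pieces — the complement of $\partial\M_\lambda$, the complement of the mini-wake $\mathcal{W}_\vartheta$, and the unbounded components of $\CC\setminus\overline{\mathcal{X}_\vartheta}$ — and it is precisely the first piece, supplied directly by the hypothesis, that covers the otherwise-inaccessible region hugging $\mathcal{J}_\vartheta$ (this is where the non-computability of $J(Q_{b_1})$ would bite). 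The remaining two pieces are enumerated by pulling back, via the computable $H_\vartheta^{-1}$ of Lemma~\ref{l:H_comp}, the computable boundary curves of the dynamical-plane sets $V_1$, $V_2$ and their $f_{b_1}$-preimages; that part of the machinery you do have right, but it cannot substitute for the missing piece coming from the hypothesis.
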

\begin{proof}
Suppose $\partial M_{\lambda}$ is upper computable.  We only need to show that we can enumerate a sequence of balls in $\C$ whose union exhaust the complement of $\mathcal{J}_{\vartheta}$.  This complement is made by the complement of $\partial M_{\lambda}$, the complement of the mini-wake $\mathcal{W}_{\vartheta}$, and the collection of the unbounded components of $\C \setminus \overline{\mathcal{X}_{\vartheta}}$.  The complement of $\partial M_{\lambda}$ is recursively enumerable by hypothesis.

Recall that the dynamical rays $R_{b_1}(0)$ and $R_{b_1}(1/2)$ both land at the $\beta$ fixed point of $f_{b_1}$. Thus, the curve $\{\beta\} \cup R_{b_1}(0) \cup R_{b_1}(1/2)$ cuts the plane into two connected components $V_1$ and $V_2$. Let $V_2$ be the one containing the escaping critical point $\omega_2$.    To see that the complement of the mini-wake $\mathcal{W}_{\vartheta}$ is also recursively enumerable, we use the fact that $H_{\vartheta}$ maps $\mathcal{W}_{\vartheta}$ (respectively $\partial \mathcal{W}_{\vartheta}$) to $V_1$ (respectively $\partial V_1$).  This is shown in the proof of Lemma 13 from \cite{BH}. In particular, $H_{\vartheta}$ maps the parameter rays $\mathcal{R}_\lambda(\vartheta^-), \mathcal{R}_\lambda(\vartheta^+)$ to the dynamical rays $R_{b_1}(0)$ and $R_{b_1}(1/2)$.  Now, let $B\subset \C$ be some computable ball containing $K(f_{b_1})$ and consider the set $B'_1=\partial (B\cap V_1)$.  By Lemma \ref{l:effective_landing}, it is straightforward to see that the curve $B'_1$ is a computable set. Since the inverse of $H_{\vartheta}$ is computable there (by Lemma \ref{l:H_comp}), we see by Lemma \ref{l:non_comp_image} that $H^{-1}_\vartheta(B'_1)$ is also a computable set.  It is now straightforward to see that the complement of the mini-wake $\mathcal{W}_{\vartheta}$ is recursively enumerable. It remains to show that the collection of unbounded components of $\C \setminus \overline{\mathcal{X}_{\vartheta}}$ is uniformly recursively enumerable. Recall that these components correspond to preimages by $H_{\vartheta}$ of the unbounded components of  $\C\setminus \overline{X_{b_1}}$. But these components are precisely given by the preimages of $V_2$ by iterates of $f_{b_1}$.  Note that, by Lemma \ref{l:effective_landing} again, the set $B'_2 = \partial (B \cap V_2)$ is computable, and using Lemma \ref{l:branches}, we see that their  preimages by $f_{b_1}$ are computable too and thus so are the preimages of these by $H_\vartheta$.  Moreover, by taking any computable point in the bounded component of the complement of $B'_2$, we see that the interior of this last collection can be uniformly enumerated, from which it is straightforward to see that the unbounded components of $\C \setminus \overline{\mathcal{X}_{\vartheta}}$ can be uniformly recursively enumerated, as it was to be shown. 

\end{proof}

\bigskip


We are now  ready to finish the proof of our main result.

\begin{theorem}There exists a computable $\lambda$ such that the bifurcation locus $M_{\lambda}$ is not computable. 
\end{theorem}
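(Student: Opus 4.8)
We assemble the ingredients prepared in the preceding lemmas. First we fix the parameter $\lambda$: by Theorem~\ref{non_comp_julia} we may choose a computable $\lambda$ with $|\lambda|=1$, and since the parameters produced in \cite{BY} are of Cremer type we may take $\lambda\neq 1$, so that the quadratic Julia set $J(P_\lambda)$ with $P_\lambda=\lambda z+z^2$ is \emph{not} computable. From now on all the objects constructed above from $\lambda$ — the parameter $b_1$, the polynomial-like restriction $Q_{b_1}$, the wakes $\mathcal{W}_0\supset\mathcal{W}_\vartheta$, the sets $\mathcal{X}_\vartheta$ and $\mathcal{J}_\vartheta$, the map $H_\vartheta$, and the straightening $\phi$ — are attached to this $\lambda$, and Lemmas~\ref{l:effective_straightening}, \ref{l:H_comp} and \ref{l:non_comp_subset} apply.

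The single fact still to be recorded is that $\mathcal{J}_\vartheta$ is \emph{lower} computable; I would obtain it in two steps. Step one: $J(Q_{b_1})$ is lower computable. Indeed $Q_{b_1}:U'\to U$ is a degree-two polynomial-like map, so its repelling cycles are dense in $J(Q_{b_1})$, and these are exactly the repelling cycles of the cubic $f_{b_1}$ all of whose points lie in $U'$. The periodic points of $f_{b_1}$ together with their multipliers are uniformly computable; moreover $G_{b_1}$ is computable on all of $\C$ (it is the effective uniform limit of the computable functions $z\mapsto 3^{-n}\log^+|f_{b_1}^{\,n}(z)|$) and the escaping critical point $\omega_2$ is computable from $b_1$, so the open set $\{G_{b_1}<G_{b_1}(\omega_2)\}$ is recursively enumerable and $U'$ is the connected component of $\omega_1$ in it; hence ``a given finite cycle is contained in $U'$'' is a semi-decidable relation, and enumerating the repelling cycles of $f_{b_1}$ that pass this test yields a uniformly computable dense sequence in $J(Q_{b_1})$. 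Step two: by Lemma~\ref{l:H_comp} the inverse of $H_\vartheta$ is computable on a set containing $J(Q_{b_1})=H_\vartheta(\mathcal{J}_\vartheta)$, so transporting that dense sequence by $H_\vartheta^{-1}$ produces a uniformly computable dense sequence in $\mathcal{J}_\vartheta$; thus $\mathcal{J}_\vartheta$ is lower computable.

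Now I would argue by contradiction: suppose $\partial\M_\lambda$ is computable. Then it is upper computable, so Lemma~\ref{l:non_comp_subset} gives that $\mathcal{J}_\vartheta$ is upper computable; combined with the previous paragraph and Proposition~\ref{computableset}, $\mathcal{J}_\vartheta$ is a computable compact set. Since $H_\vartheta$ is continuous on the lower computable domain $\mathcal{W}_0$ and, by Lemma~\ref{l:H_comp}, computable on $\mathcal{J}_\vartheta$, Lemma~\ref{l:non_comp_image} shows that $J(Q_{b_1})=H_\vartheta(\mathcal{J}_\vartheta)$ is a computable compact set. Applying Lemma~\ref{l:non_comp_image} a second time to the straightening $\phi:\C\to\C$, which is computable on $J(Q_{b_1})$ by Lemma~\ref{l:effective_straightening}, we conclude that $J(P_\lambda)=\phi(J(Q_{b_1}))$ is computable. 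This contradicts the choice of $\lambda$, and therefore $\partial\M_\lambda$ is not computable. Since $\lambda$ is produced by the explicit algorithm of \cite{BY}, this proves the theorem.

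Almost all of the work lives in the lemmas, and the proof of the theorem itself is essentially bookkeeping. The step where I expect the most care is the lower computability of $\mathcal{J}_\vartheta$: one must verify that ``a repelling cycle of $f_{b_1}$ lies in $U'$'' is genuinely semi-decidable — which rests on $U'$ being the connected component of $\omega_1$ in a recursively enumerable open set, so that membership can be confirmed though never refuted — and that the dense sequence obtained in $J(Q_{b_1})$ is transported by $H_\vartheta^{-1}$ while staying inside the set on which Lemma~\ref{l:H_comp} guarantees computability of the inverse.
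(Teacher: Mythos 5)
Your argument is correct, and it follows the same chain of reductions as the paper: pick $\lambda$ from Theorem~\ref{non_comp_julia}, transfer non-computability through $\phi$ to $J(Q_{b_1})$ and through $H_\vartheta$ to $\mathcal{J}_\vartheta$, and then invoke Lemma~\ref{l:non_comp_subset}. The useful contribution of your write-up is that you make explicit the step the paper's proof leaves implicit: Lemma~\ref{l:non_comp_subset} only yields \emph{upper} computability of $\mathcal{J}_\vartheta$ from upper computability of $\partial\M_\lambda$, so one cannot derive the desired contradiction from ``$\mathcal{J}_\vartheta$ is not computable'' alone without also knowing that $\mathcal{J}_\vartheta$ is \emph{lower} computable. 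Your two-step argument --- enumerating the repelling cycles of $f_{b_1}$ whose orbits can be verified to lie in $U'$ (semi-decidable, since $U'$ is the component of a computable point in the r.e.\ open set $\{G_{b_1}<G_{b_1}(\omega_2)\}$), and then transporting the resulting dense sequence by the computable $H_\vartheta^{-1}$ of Lemma~\ref{l:H_comp} --- fills this in cleanly. An equivalent and slightly shorter route would have been to stop at lower computability of $J(Q_{b_1})$ and run the image argument only on the upper-computability side, but your version is fine and perhaps closer in spirit to the paper's reductions.

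One small factual slip: the non-computable parameters produced in \cite{BY} are Siegel parameters (quadratics with a Siegel disk of non-computable conformal radius), not Cremer parameters. This does not affect your argument, since in either case $\lambda=e^{2\pi i\theta}$ with $\theta$ irrational, so $\lambda\neq 1$ and the hypothesis of Lemma~\ref{l:effective_straightening} is met.
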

\begin{proof}
By Theorem \ref{non_comp_julia}, there exist a computable $\lambda$ such that $J(\lambda z + z^2)$ is not computable. By Lemma \ref{l:effective_straightening} and Lemma \ref{l:non_comp_image}, the Julia set $J_{b_1}$ of $Q_{b_1}$ is not computable either. We prove that the preimage of this set by $H_\vartheta$ is not computable. It is enough to show that the map $H_\vartheta$ is computable on this preimage.  From the proof of Lemma \ref{l:non_comp_subset} we see that $H_\vartheta$ is computable on the closure of $\C \setminus \M_\lambda$, which contains $\mathcal{J}_\vartheta$. By Lemma \ref{l:non_comp_image}, $\mathcal{J}_\vartheta$ is not computable, and the Theorem now follows from Lemma \ref{l:non_comp_subset}.
\end{proof}

\bibliographystyle{plain}
\bibliography{biblio}

\end{document}